\documentclass[10pt]{article}


\usepackage{amsmath,amsbsy,amssymb,amsthm}
\usepackage{a4wide}
\usepackage{graphicx,psfrag}
\usepackage{subfigure}

\newtheorem{Theorem}{Theorem}
\newtheorem{Definition}{Definition}

\newtheorem{Lemma}{Lemma}


\def\a{{\alpha(t)}}
\def\Da{{\alpha'(t)}}
\def\t{\tau}
\def\DI{{_a\mathbb{D}_t^\a}}
\def\DII{{_aD_t^\a}}
\def\DIII{{_a\mathcal{D}_t^\a}}
\def\DIR{{_t\mathbb{D}_b^\a}}
\def\DIIR{{_tD_b^\a}}
\def\DIIIR{{_t\mathcal{D}_b^\a}}
\def\C{\binom{n-\a}{p}}
\def\D{\binom{1-\a}{p}}
\def\DS{\displaystyle}


\begin{document}

\title{Caputo--Hadamard fractional derivatives of variable order}

\author{Ricardo Almeida\\
\texttt{ricardo.almeida@ua.pt}}

\date{\em{Center for Research and Development in Mathematics and Applications (CIDMA),
Department of Mathematics, University of Aveiro, 3810--193 Aveiro, Portugal}}

\maketitle


\begin{abstract}

In this paper we present three types of Caputo--Hadamard derivatives of variable fractional order, and study the relations between them.
An approximation formula for each fractional operator, using integer-order derivatives only, is obtained, and an estimation for the error is given. At the end  we compare the exact fractional derivative of a concrete example with some numerical approximations.

\bigskip

\noindent \textbf{Keywords}: fractional calculus, variable fractional order, Caputo fractional derivative, Hadamard fractional derivative, expansion formulas.

\smallskip

\noindent \textbf{Mathematics Subject Classification 2010}: 26A33, 33F05.
\end{abstract}


\section{Introduction}

At the same time ordinary calculus was developed for integer-order derivatives in the seventeenth century, L'H\^{o}pital and Leibniz  wondered about the notion of derivative of order $n=1/2$. 
Two centuries later, with the works of Fourier, Riemann, Liouville, Hadamard, Gr\"{u}nwald, etc., numerous definitions trying to generalize the notion of ordinary derivative were developed.
This was primarily a theoretical study for mathematicians, with several notions of fractional operators appearing and their properties well studied.
In the past decades, with the discovery that processes like conservation mass, viscoelasticity, nanotechnology, signal processing, and several other applications in engineering are better described by fractional derivatives, these discoveries have gained a great importance.
Fractional derivatives are nonlocal concepts, and  for this reason may be more suitable to translate natural phenomena. In our days, one can find several books and journals dedicated exclusively to fractional calculus theory, not only on the subject of mathematics but also physics, engineering, economics, applied sciences, etc.

Due the complexity of dealing with fractional operators, we find different numerical approaches to solve the desired problems. One of those available methods consists in approximating the fractional derivative by an expansion that depends on integer-order derivatives only \cite{Atanackovic1,Pooseh4,Pooseh5}. With this technique in hand, given any problem involving fractional operators, we can simply replace  them with the given  expansion, obtaining a new problem that depends on integer-order derivatives only. After that, we can apply any known technique to solve it. The main advantage of this procedure is that we do not need higher-order derivatives in order to have a good approximation, in opposite to other methods e.g. the approximation for the Riemann--Liouville fractional derivative \cite{samko},
$${_aD_t^\alpha}x(t)=\sum_{n=0}^\infty \binom{\alpha}{n}\frac{(t-a)^{n-\alpha}}{\Gamma(n+1-\alpha)}x^{(n)}(t)$$
and for the Hadamard fractional derivative we have  \cite{Butzer2}
$${_0^HD_t^\alpha}x(t)=\sum_{n=0}^\infty S(\alpha,n)t^n x^{(n)}(t),$$
where $S(\alpha,n)$ is the Stirling function.

\section{Fractional calculus of variable order}
\label{sec:FC}

In this section, we present three new types of fractional operators, which combine the Caputo with the Hadamard fractional derivatives. The order of the derivative is a function $\a$, that depends on time, and when it is constant we prove that the three definitions coincide.  To start, we review some concepts on fractional derivative operators with constant order \cite{Machado,Miller,samko}. Let $\alpha$ be a real in the interval $(0,1)$ and $x$ be a real valued function with domain $[a,b]$. The Caputo fractional derivative of $x$ of order $\alpha$ is given by
$${_a^CD_t^\alpha}x(t)={_aD_t^\alpha}[x(t)-x(a)],$$
where ${_aD_t^\alpha}$ stands for the Riemann-Liouville fractional derivative:
$${_aD_t^\alpha}x(t)=\frac{1}{\Gamma(1-\alpha)}\frac{d}{dt}\int_a^t(t-\t)^{-\alpha}x(\t)d\t.$$
If $x$ is differentiable, using integration by parts and then differentiating the integral, we obtain an equivalent definition for the Caputo fractional derivative:
$${_a^CD_t^\alpha}x(t)=\frac{1}{\Gamma(1-\alpha)}\int_a^t(t-\t)^{-\alpha}x'(\t)d\t.$$
With respect to the Hadamard fractional derivative, we have the following definition:
$${_a^HD_t^\alpha}x(t)=\frac{t}{\Gamma(1-\alpha)}\frac{d}{dt}\int_a^t \left(\ln\frac{t}{\tau}\right)^{-\alpha}\frac{x(\tau)}{\tau}d\tau.$$
For example, for $\gamma>0$, if we take the two functions $x(t)=(t-a)^\gamma$ and $y(t)=\left(\ln\frac{t}{a}\right)^\gamma$, then
$${_a^CD_t^\alpha}x(t)=\frac{\Gamma(\gamma+1)}{\Gamma(\gamma+1-\alpha)} (t-a)^{\gamma-\alpha} \quad \mbox{and} \quad {_a^HD_t^\alpha}y(t)=  \frac{\Gamma(\gamma+1)}{\Gamma(\gamma+1-\alpha)} \left(\ln\frac{t}{a}\right)^{\gamma-\alpha}.$$
One natural extension of these two concepts is to define the Caputo--Hadamard fractional derivative of order $\alpha$ \cite{Gambo5,Jarad5}:
$${_{\,\,\,\,\,a}^{CH}D_t^\alpha}x(t)=\frac{t}{\Gamma(1-\alpha)}\frac{d}{dt}\int_a^t \left(\ln\frac{t}{\tau}\right)^{-\alpha}\frac{x(\tau)-x(a)}{\tau}d\tau,$$
or in an equivalent way,
$${_{\,\,\,\,\,a}^{CH}D_t^\alpha}x(t)=\frac{1}{\Gamma(1-\alpha)}\int_a^t \left(\ln\frac{t}{\tau}\right)^{-\alpha}x'(\tau)d\tau.$$
We extend the previous notions by considering the order of the derivative a real valued function $\alpha:[a,b]\to(0,1)$. This is a very recent direction of research, and was studied for the first time in \cite{SamkoRoss} with respect to  the Riemann--Liouville fractional derivative. Since these fractional operators are nonlocal, and contain memory about the past dynamic, it is natural to consider the order of the derivative also variable, depending on the process. In fact, several applications were found and it is nowadays a  growing subject that has attracted the attention of a vast community   \cite{Coimbra,Coimbra2,Dzherbashyan,Lin,Od,Ramirez2,Sun}. In this paper we deal with a combined Caputo--Hadamard fractional derivative with fractional variable order. Three different types of operators are given.

\begin{Definition} Let $a,b$ be two reals with $0<a<b$, and $x:[a,b]\to\mathbb R$ be a function. The left Caputo--Hadamard fractional derivative of order $\a$
\begin{enumerate}
\item type 1 is defined by
$$\DI x(t)=\frac{1}{\Gamma(1-\a)}\int_a^t\left(\ln\frac{t}{\t}\right)^{-\a}x'(\t)d\t;$$
\item type 2 is defined by
$$\DII x(t)=\frac{t}{\Gamma(1-\a)}\frac{d}{dt}\left(\int_a^t\left(\ln\frac{t}{\t}\right)^{-\a}\frac{x(\t)-x(a)}{\t}d\t\right);$$
\item type 3 is defined by
$$\DIII x(t)=t\frac{d}{dt}\left(\frac{1}{\Gamma(1-\a)}\int_a^t\left(\ln\frac{t}{\t}\right)^{-\a}\frac{x(\t)-x(a)}{\t}d\t\right).$$
\end{enumerate}
\end{Definition}

Analogous definitions for the right fractional operators are given:

\begin{Definition} Let $a,b$ be two reals with $0<a<b$, and $x:[a,b]\to\mathbb R$ be a function. The right Caputo--Hadamard fractional derivative of order $\a$
\begin{enumerate}
\item type 1 is defined by
$$\DIR x(t)=\frac{-1}{\Gamma(1-\a)}\int_t^b\left(\ln\frac{\t}{t}\right)^{-\a}x'(\t)d\t;$$
\item type 2 is defined by
$$\DIIR x(t)=\frac{-t}{\Gamma(1-\a)}\frac{d}{dt}\left(\int_t^b\left(\ln\frac{\t}{t}\right)^{-\a}\frac{x(\t)-x(b)}{\t}d\t\right);$$
\item type 3 is defined by
$$\DIIIR x(t)=-t\frac{d}{dt}\left(\frac{1}{\Gamma(1-\a)}\int_t^b\left(\ln\frac{\t}{t}\right)^{-\a}\frac{x(\t)-x(b)}{\t}d\t\right).$$
\end{enumerate}
\end{Definition}

We will see that these definitions do not coincide. To start, we prove the following Lemma.

\begin{Lemma}\label{LemmaEx}
Let $\gamma>0$ and consider the function $x(t)=\left(\ln\frac{t}{a}\right)^\gamma$. Then
\begin{enumerate}
\item $\DI x(t) =\DS\frac{\Gamma(\gamma+1)}{\Gamma(\gamma+1-\a)} \left(\ln\frac{t}{a}\right)^{\gamma-\a}$.
\item $\DII x(t) =\DS\frac{\Gamma(\gamma+1)}{\Gamma(\gamma+1-\a)} \left(\ln\frac{t}{a}\right)^{\gamma-\a}$

$\DS\quad -\frac{t\Da\Gamma(\gamma+1)}{\Gamma(\gamma+2-\a)} \left(\ln\frac{t}{a}\right)^{\gamma+1-\a} \left[\ln \left(\ln\frac{t}{a}\right)+\psi(1-\a)-\psi(\gamma+2-\a)\right]$.
\item $\DIII x(t) =\DS\frac{\Gamma(\gamma+1)}{\Gamma(\gamma+1-\a)} \left(\ln\frac{t}{a}\right)^{\gamma-\a}$

$\DS\quad -\frac{t\Da\Gamma(\gamma+1)}{\Gamma(\gamma+2-\a)} \left(\ln\frac{t}{a}\right)^{\gamma+1-\a} \left[\ln \left(\ln\frac{t}{a}\right)-\psi(\gamma+2-\a)\right]$.
\end{enumerate}
\end{Lemma}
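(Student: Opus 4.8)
The plan is to reduce all three computations to a single Beta-integral identity and then, for types 2 and 3, to differentiate the resulting closed form while carefully tracking the dependence of $\a$ on $t$.

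First I would record the change of variables that linearises the Hadamard kernel. Writing $s=\ln\t$, $A=\ln a$, $T=\ln t$ turns $\ln\frac{t}{\t}$ into $T-s$, turns $\ln\frac{\t}{a}$ into $s-A$, and sends $\frac{d\t}{\t}$ to $ds$; the affine substitution $s=A+(T-A)w$ then collapses the integral to a Beta function. Concretely, for any $\beta>0$ this yields the master formula
\[\int_a^t\left(\ln\frac{t}{\t}\right)^{-\a}\left(\ln\frac{\t}{a}\right)^{\beta-1}\frac{d\t}{\t}=\left(\ln\frac{t}{a}\right)^{\beta-\a}\frac{\Gamma(\beta)\Gamma(1-\a)}{\Gamma(\beta+1-\a)}.\]
Since $x(a)=0$ for our $x$, and $x'(\t)=\gamma\left(\ln\frac{\t}{a}\right)^{\gamma-1}/\t$, part 1 follows immediately by applying the master formula with $\beta=\gamma$ and using $\gamma\Gamma(\gamma)=\Gamma(\gamma+1)$; the factor $\Gamma(1-\a)$ cancels against the prefactor in the definition of $\DI$.

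For parts 2 and 3 I would first evaluate the inner integral. Because $x(\t)-x(a)=\left(\ln\frac{\t}{a}\right)^{\gamma}$, the master formula with $\beta=\gamma+1$ gives
\[\int_a^t\left(\ln\frac{t}{\t}\right)^{-\a}\frac{x(\t)-x(a)}{\t}d\t=\left(\ln\frac{t}{a}\right)^{\gamma+1-\a}\frac{\Gamma(\gamma+1)\Gamma(1-\a)}{\Gamma(\gamma+2-\a)}.\]
It then remains to apply the operator $t\frac{d}{dt}$ together with the prefactor $1/\Gamma(1-\a)$, placed inside the differentiation for type 3 and outside it for type 2. The differentiation is the delicate step: the variable $\a=\alpha(t)$ enters both through the exponent $\gamma+1-\a$ and through the Gamma factors. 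Writing $L=\ln\frac{t}{a}$ with $L'=1/t$, differentiating $L^{\gamma+1-\a}$ produces $-\Da L^{\gamma+1-\a}\ln L+\frac{\gamma+1-\a}{t}L^{\gamma-\a}$, while differentiating the Gamma quotients produces digamma terms via $\frac{d}{dz}\ln\Gamma(z)=\psi(z)$ and the chain rule. For type 2 the quotient $\Gamma(1-\a)/\Gamma(\gamma+2-\a)$ contributes $\Da[\psi(\gamma+2-\a)-\psi(1-\a)]$, whereas for type 3 the factor $\Gamma(1-\a)$ cancels before differentiation so that only $1/\Gamma(\gamma+2-\a)$ survives and contributes $\Da\,\psi(\gamma+2-\a)$; this is exactly the source of the discrepancy between the bracketed terms in parts 2 and 3.

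Finally I would collect terms. The piece coming from $\frac{\gamma+1-\a}{t}L^{\gamma-\a}$, after multiplication by $t$ and use of $\Gamma(\gamma+2-\a)=(\gamma+1-\a)\Gamma(\gamma+1-\a)$, simplifies to the common leading term $\frac{\Gamma(\gamma+1)}{\Gamma(\gamma+1-\a)}L^{\gamma-\a}$ in both cases. The remaining $\Da$-terms combine into $-\frac{t\Da\Gamma(\gamma+1)}{\Gamma(\gamma+2-\a)}L^{\gamma+1-\a}$ times the bracket $\ln L+\psi(1-\a)-\psi(\gamma+2-\a)$ for type 2 and $\ln L-\psi(\gamma+2-\a)$ for type 3, where $\ln L=\ln\left(\ln\frac{t}{a}\right)$. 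The main obstacle I anticipate is purely bookkeeping: keeping the signs consistent across the two chain-rule sources — the exponent and the Gamma arguments, each carrying a factor $-\Da$ — and recognising the Beta-to-Gamma simplification, rather than any conceptual difficulty.
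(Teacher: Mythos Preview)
Your proposal is correct and follows essentially the same route as the paper: the Beta-integral reduction via the substitution $\ln\frac{\t}{a}=s\ln\frac{t}{a}$ (your two-step version is equivalent) is exactly how the paper handles part~1, and your treatment of parts~2 and~3 --- evaluating the inner integral in closed form and then applying $t\,d/dt$ with the digamma chain rule --- is the natural reading of the paper's ``the others are proven in a similar way.'' Your bookkeeping of the $\Da$-terms and the simplification via $\Gamma(\gamma+2-\a)=(\gamma+1-\a)\Gamma(\gamma+1-\a)$ is accurate.
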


\begin{proof} We only prove the first one; the others are proven in a similar way. First, observe that
$$\begin{array}{ll}
\DI x(t)&=\DS\frac{1}{\Gamma(1-\a)}\int_a^t\left(\ln\frac{t}{\t}\right)^{-\a}\frac{\gamma}{\t}\left(\ln\frac{\t}{a}\right)^{\gamma-1}d\t\\
&=\DS\frac{\gamma}{\Gamma(1-\a)}\left(\ln\frac{t}{a}\right)^{-\a}\int_a^t\left(1-\frac{\ln\frac{\t}{a}}{\ln\frac{t}{a}}\right)^{-\a}
\left(\ln\frac{\t}{a}\right)^{\gamma-1}\frac{d\t}{\t}.
\end{array}$$
If we proceed with the change of variables
$$\ln\frac{\t}{a}=s\ln\frac{t}{a},$$
we get
$$\begin{array}{ll}
\DI x(t) &=\DS \frac{\gamma}{\Gamma(1-\a)}\left(\ln\frac{t}{a}\right)^{-\a}\int_0^1 (1-s)^{-\a}s^{\gamma-1}\left(\ln\frac{t}{a}\right)^{\gamma-1}
\left(\ln\frac{t}{a}\right) ds\\
&=\DS \frac{\gamma}{\Gamma(1-\a)}\left(\ln\frac{t}{a}\right)^{\gamma-\a}\int_0^1 (1-s)^{-\a}s^{\gamma-1}ds\\
&=\DS \frac{\gamma}{\Gamma(1-\a)}\left(\ln\frac{t}{a}\right)^{\gamma-\a}B(1-\a,\gamma),
\end{array}$$
where $B(\cdot,\cdot)$ is the beta function.
Using the relation
$$B(1-\a,\gamma)=\frac{\Gamma(1-\a)\Gamma(\gamma)}{\Gamma(\gamma+1-\a)},$$
we prove the desired formula.
\end{proof}

In a similar way we have the following:

\begin{Lemma}
Let $\gamma>0$ and consider the function $x(t)=\left(\ln\frac{b}{t}\right)^\gamma$. Then
\begin{enumerate}
\item $\DIR x(t) =\DS\frac{\Gamma(\gamma+1)}{\Gamma(\gamma+1-\a)} \left(\ln\frac{b}{t}\right)^{\gamma-\a}$.
\item $\DIIR x(t) =\DS\frac{\Gamma(\gamma+1)}{\Gamma(\gamma+1-\a)} \left(\ln\frac{b}{t}\right)^{\gamma-\a}$

$\DS\quad +\frac{t\Da\Gamma(\gamma+1)}{\Gamma(\gamma+2-\a)} \left(\ln\frac{b}{t}\right)^{\gamma+1-\a} \left[\ln \left(\ln\frac{b}{t}\right)+\psi(1-\a)-\psi(\gamma+2-\a)\right]$.
\item $\DIIIR x(t) =\DS\frac{\Gamma(\gamma+1)}{\Gamma(\gamma+1-\a)} \left(\ln\frac{b}{t}\right)^{\gamma-\a}$

$\DS\quad +\frac{t\Da\Gamma(\gamma+1)}{\Gamma(\gamma+2-\a)} \left(\ln\frac{b}{t}\right)^{\gamma+1-\a} \left[\ln \left(\ln\frac{b}{t}\right)-\psi(\gamma+2-\a)\right]$.
\end{enumerate}
\end{Lemma}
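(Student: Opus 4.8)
The plan is to mirror the proof of Lemma~\ref{LemmaEx}, since replacing $\ln\frac{\t}{a}$ by $\ln\frac{b}{\t}$ and $a$ by $b$ throughout converts the left-sided computation into the right-sided one. The only structural differences are a minus sign in each definition and the fact that $\frac{d}{dt}\ln\frac{b}{t}=-\frac1t$ carries the opposite sign to $\frac{d}{dt}\ln\frac{t}{a}=\frac1t$.

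I would first dispatch type~1. Differentiating gives $x'(\t)=-\frac{\gamma}{\t}\left(\ln\frac{b}{\t}\right)^{\gamma-1}$, and this minus sign cancels the $-1$ in the definition of $\DIR$. Writing $\ln\frac{\t}{t}=\ln\frac{b}{t}-\ln\frac{b}{\t}=\ln\frac{b}{t}\left(1-\frac{\ln(b/\t)}{\ln(b/t)}\right)$ lets me factor out $\left(\ln\frac{b}{t}\right)^{-\a}$, and the substitution $\ln\frac{b}{\t}=s\ln\frac{b}{t}$ sends $\t=t\mapsto s=1$, $\t=b\mapsto s=0$, and $\frac{d\t}{\t}\mapsto-\ln\frac{b}{t}\,ds$. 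The reversed limits absorb the extra sign, the integral collapses to $\left(\ln\frac{b}{t}\right)^{\gamma-\a}B(1-\a,\gamma)$, and the identity $B(1-\a,\gamma)=\frac{\Gamma(1-\a)\Gamma(\gamma)}{\Gamma(\gamma+1-\a)}$ gives part~1.

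For types~2 and 3 I would first note that $x(b)=0$, so $x(\t)-x(b)=x(\t)$, and evaluate the inner integral by the very same substitution, now with exponent $\gamma$ in place of $\gamma-1$. This produces the closed form
$$I(t):=\frac{1}{\Gamma(1-\a)}\int_t^b\left(\ln\frac{\t}{t}\right)^{-\a}\frac{x(\t)}{\t}\,d\t=\frac{\Gamma(\gamma+1)}{\Gamma(\gamma+2-\a)}\left(\ln\frac{b}{t}\right)^{\gamma+1-\a}.$$
Then $\DIIIR x(t)=-t\,I'(t)$, while $\DIIR x(t)=\frac{-t}{\Gamma(1-\a)}\frac{d}{dt}\bigl(\Gamma(1-\a)\,I(t)\bigr)$, so both reduce to differentiating $I$.

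The computational heart, and the step I expect to be the main obstacle, is carrying out this differentiation while respecting that the order $\a$ depends on $t$. Using $\frac{d}{dt}\ln\frac{b}{t}=-\frac{1}{t}$ and $\Gamma'(z)=\Gamma(z)\psi(z)$, the factor $\left(\ln\frac{b}{t}\right)^{\gamma+1-\a}$ yields, after multiplying by $-t$ and simplifying $(\gamma+1-\a)$ against $\Gamma(\gamma+2-\a)$, the leading term $\frac{\Gamma(\gamma+1)}{\Gamma(\gamma+1-\a)}\left(\ln\frac{b}{t}\right)^{\gamma-\a}$, together with a term $\frac{t\Da\Gamma(\gamma+1)}{\Gamma(\gamma+2-\a)}\left(\ln\frac{b}{t}\right)^{\gamma+1-\a}\ln\!\left(\ln\frac{b}{t}\right)$ coming from the chain rule applied to the exponent; differentiating $1/\Gamma(\gamma+2-\a)$ supplies the $-\psi(\gamma+2-\a)$ inside the bracket. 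This is exactly part~3. Finally, for part~2 the extra factor $\Gamma(1-\a)$ sitting inside the derivative contributes, through $\Gamma'(1-\a)=\Gamma(1-\a)\psi(1-\a)$, the single additional term $+\frac{t\Da\Gamma(\gamma+1)}{\Gamma(\gamma+2-\a)}\left(\ln\frac{b}{t}\right)^{\gamma+1-\a}\psi(1-\a)$, which is precisely the $\psi(1-\a)$ by which the brackets of parts~2 and 3 differ. The $+$ signs on all the $\Da$-terms (against the $-$ signs in Lemma~\ref{LemmaEx}) trace back to the prefactor $-t\frac{d}{dt}$ here versus $+t\frac{d}{dt}$ there.
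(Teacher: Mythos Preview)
Your proposal is correct and matches the paper's intended approach: the paper gives no explicit proof of this lemma beyond the remark ``In a similar way we have the following'', and even for Lemma~\ref{LemmaEx} only part~1 is worked out. Your treatment of part~1 is exactly the right-sided mirror of the paper's computation, and for parts~2 and~3 you supply precisely the details (evaluating the inner integral in closed form via the Beta substitution, then differentiating with respect to $t$ while tracking the $t$-dependence of $\a$ through $\Gamma'=\Gamma\psi$) that the paper omits in both lemmas.
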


\begin{Theorem}\label{relations} The following relations hold:
$$\DII x(t)=\DI x(t)+\frac{t\Da}{\Gamma(2-\a)}\int_a^t\left(\ln\frac{t}{\t}\right)^{1-\a}x'(\t)\left[\frac{1}{1-\a}-\ln\left(\ln\frac{t}{\t}\right)\right]d\t$$
and
$$\DIII x(t)=\DII x(t)+\frac{t\Da\psi(1-\a)}{\Gamma(1-\a)}\int_a^t\left(\ln\frac{t}{\t}\right)^{-\a}\frac{x(\t)-x(a)}{\t}d\t.$$
\end{Theorem}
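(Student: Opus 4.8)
The plan is to handle the two identities separately, treating the second (which is purely algebraic) as a warm-up before attacking the first. Throughout I would write
$$I(t)=\int_a^t\left(\ln\frac{t}{\t}\right)^{-\a}\frac{x(\t)-x(a)}{\t}d\t$$
for the common inner integral appearing in the definitions of $\DII$ and $\DIII$.

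For the second relation, the key observation is that $\DII$ and $\DIII$ differ only in the placement of the factor $1/\Gamma(1-\a)$: in type~2 it sits outside the operator $t\,d/dt$, whereas in type~3 it sits inside. So I would apply the product rule to $\DIII x(t)=t\frac{d}{dt}\left(\Gamma(1-\a)^{-1}I(t)\right)$. The term in which $d/dt$ acts on $I(t)$ reproduces $\DII x(t)$ verbatim, and the term in which it acts on $\Gamma(1-\a)^{-1}$ is computed by the chain rule together with $\Gamma'(z)=\psi(z)\Gamma(z)$, giving $\frac{d}{dt}\Gamma(1-\a)^{-1}=\frac{\psi(1-\a)\Da}{\Gamma(1-\a)}$. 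Multiplying by $t\,I(t)$ yields exactly the stated correction $\frac{t\Da\psi(1-\a)}{\Gamma(1-\a)}I(t)$.

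For the first relation I would begin by integrating the inner integral of $\DII$ by parts in $\t$, holding $\a=\a(t)$ fixed. Since $-\frac{1}{1-\a}\left(\ln\frac{t}{\t}\right)^{1-\a}$ is an antiderivative of $\left(\ln\frac{t}{\t}\right)^{-\a}\frac{1}{\t}$, and both boundary contributions vanish — at $\t=t$ because $\left(\ln\frac{t}{t}\right)^{1-\a}=0$ for $1-\a>0$, and at $\t=a$ because $x(a)-x(a)=0$ — this gives
$$I(t)=\frac{1}{1-\a}\int_a^t\left(\ln\frac{t}{\t}\right)^{1-\a}x'(\t)d\t.$$

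The crucial and most delicate step is then to differentiate $\DII x(t)=\frac{t}{\Gamma(1-\a)}\frac{d}{dt}I(t)$, because $\a$ now enters in three places — the prefactor $\frac{1}{1-\a}$, the exponent $1-\a$, and the argument $\ln\frac{t}{\t}$ — while $t$ also occurs in the upper limit. The Leibniz boundary term again vanishes, and $\frac{\partial}{\partial t}\left(\ln\frac{t}{\t}\right)^{1-\a}$ splits into a piece $-\Da\left(\ln\frac{t}{\t}\right)^{1-\a}\ln\left(\ln\frac{t}{\t}\right)$ from differentiating the exponent and a piece $\frac{1-\a}{t}\left(\ln\frac{t}{\t}\right)^{-\a}$ from the chain rule on $\ln\frac{t}{\t}$. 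After multiplying through by $\frac{t}{\Gamma(1-\a)}$, the pure chain-rule piece collapses to $\DI x(t)$, while the exponent piece together with the contribution from differentiating the prefactor $\frac{1}{1-\a}$ combine; invoking $\Gamma(2-\a)=(1-\a)\Gamma(1-\a)$ they assemble into exactly
$$\frac{t\Da}{\Gamma(2-\a)}\int_a^t\left(\ln\frac{t}{\t}\right)^{1-\a}x'(\t)\left[\frac{1}{1-\a}-\ln\left(\ln\frac{t}{\t}\right)\right]d\t.$$
I expect the main obstacle to be precisely this bookkeeping: correctly isolating the three sources of $t$-dependence and recognizing which combination rebuilds $\DI x(t)$ and which yields the correction term.
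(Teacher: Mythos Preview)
Your proposal is correct and follows essentially the same approach as the paper's proof: integrate $I(t)$ by parts (with the same choice of $u'$ and $v$) to get $\DII x(t)=\frac{t}{\Gamma(1-\a)}\frac{d}{dt}\big(\frac{1}{1-\a}\int_a^t(\ln\frac{t}{\t})^{1-\a}x'(\t)\,d\t\big)$, then differentiate the product, and for the second identity apply the product rule directly to the definition. The only difference is that you spell out the bookkeeping (the three sources of $t$-dependence, the use of $\Gamma(2-\a)=(1-\a)\Gamma(1-\a)$, and the digamma identity for $\frac{d}{dt}\Gamma(1-\a)^{-1}$) where the paper simply writes ``differentiating the product, we prove the formula'' and ``the second one follows immediately from the definition.''
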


\begin{proof} Starting with the formula
$$\DII x(t)=\frac{t}{\Gamma(1-\a)}\frac{d}{dt}\left(\int_a^t\left(\ln\frac{t}{\t}\right)^{-\a}\frac{x(\t)-x(a)}{\t}d\t\right),$$
and integrating by parts choosing
$$u'(\t)=\left(\ln\frac{t}{\t}\right)^{-\a}\frac{1}{\t} \quad \mbox{and} \quad v(\t)=x(\t)-x(a),$$
we obtain
$$\DII x(t)=\frac{t}{\Gamma(1-\a)}\frac{d}{dt}\left(\frac{1}{1-\a}\int_a^t\left(\ln\frac{t}{\t}\right)^{1-\a}x'(\t)d\t\right).$$
Differentiating the product, we prove the formula. The second one follows immediately from the definition.
\end{proof}

From this result, for an arbitrary function $x$, we see that these three definitions coincide
$$\DI x(t)\equiv \DII x(t)\equiv \DIII x(t),$$
 only when the order $\a$ or the function $x$ are constant.

For what concerns the right fractional operators, we have the two following relations.
$$\DIIR x(t)=\DIR x(t)+\frac{t\Da}{\Gamma(2-\a)}\int_t^b\left(\ln\frac{\t}{t}\right)^{1-\a}x'(\t)\left[\frac{1}{1-\a}-\ln\left(\ln\frac{\t}{t}\right)\right]d\t$$
and
$$\DIIIR x(t)=\DIIR x(t)-\frac{t\Da\psi(1-\a)}{\Gamma(1-\a)}\int_t^b\left(\ln\frac{\t}{t}\right)^{-\a}\frac{x(\t)-x(b)}{\t}d\t.$$

\begin{Theorem} Let $x$ be of class $C^1$. Then
$$\DI x(t)=\DII x(t)=\DIII x(t)=0,$$
at $t=a$.
\end{Theorem}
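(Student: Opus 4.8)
The plan is to read the statement as the one-sided limit $t\to a^+$ and to reduce Types 2 and 3 to Type 1 via Theorem \ref{relations}, rather than differentiating the singular integrals directly. Throughout I rely on the standing regularity implicit in the definitions: $\alpha$ is continuous and differentiable with $\alpha(a)\in(0,1)$ and $\alpha'$ bounded near $a$, so that as $t\to a^+$ the quantities $\Gamma(1-\a)$, $\Gamma(2-\a)$, $\psi(1-\a)$ and $t\Da$ all stay bounded (and the Gamma factors stay bounded away from $0$). Since $x\in C^1[a,b]$, I set $M=\sup_{[a,b]}|x'|$.

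First I would dispose of Type 1 directly. Bounding $|x'(\t)|\le M$ gives
$$\left|\DI x(t)\right|\le \frac{M}{\Gamma(1-\a)}\int_a^t\left(\ln\frac{t}{\t}\right)^{-\a}d\t.$$
Using the substitution $u=\ln\frac{t}{\t}$ (so that $\t=t e^{-u}$ and $d\t=-t e^{-u}\,du$) and the crude bound $e^{-u}\le 1$, the integral is at most $t\,(\ln\frac{t}{a})^{1-\a}/(1-\a)$, which tends to $0$ as $t\to a^+$ because $\ln\frac{t}{a}\to 0$ while $1-\a\to 1-\alpha(a)>0$. Hence $\DI x(t)\to 0$.

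For Type 2 I would invoke the first identity of Theorem \ref{relations}, which already expresses $\DII x(t)$ as $\DI x(t)$ plus an explicit integral with no remaining derivative. The first summand vanishes in the limit by the step above, so it suffices to show that
$$\int_a^t\left(\ln\frac{t}{\t}\right)^{1-\a}\left[\frac{1}{1-\a}-\ln\left(\ln\frac{t}{\t}\right)\right]d\t \to 0.$$
The same substitution converts this into $\int_0^{\ln(t/a)} u^{1-\a}\big(\tfrac{1}{1-\a}-\ln u\big)t e^{-u}\,du$; since $u^{1-\a}|\ln u|$ is integrable near $0$ and the upper limit $\ln\frac{t}{a}$ shrinks to $0$, the integral tends to $0$, and multiplying by the bounded prefactor $t\Da/\Gamma(2-\a)$ yields $\DII x(t)\to 0$. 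For Type 3 I would use the second identity of Theorem \ref{relations}, which writes $\DIII x(t)$ as $\DII x(t)$ plus $\frac{t\Da\psi(1-\a)}{\Gamma(1-\a)}\int_a^t(\ln\frac{t}{\t})^{-\a}\frac{x(\t)-x(a)}{\t}\,d\t$; the first term tends to $0$ by the previous step, while the mean-value estimate $|x(\t)-x(a)|\le M(\t-a)\le M(t-a)$ together with $1/\t\le 1/a$ bounds the remaining integral by $\frac{M(t-a)}{a}\int_a^t(\ln\frac{t}{\t})^{-\a}\,d\t$, which again tends to $0$ by the Type 1 estimate. Thus $\DIII x(t)\to 0$.

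The one delicate point is the factor $\ln(\ln\frac{t}{\t})$ in the Type 2 correction, which diverges as $\t\to t$. The argument hinges on the companion factor $(\ln\frac{t}{\t})^{1-\a}$ damping that logarithmic singularity, and this is exactly where the hypothesis $\alpha<1$ (equivalently $1-\a>0$) enters; everything else is a routine domination argument over a domain shrinking to a point.
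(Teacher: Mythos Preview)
Your proof is correct and follows essentially the same route as the paper: a direct bound for Type~1, then the two identities of Theorem~\ref{relations} to reduce Types~2 and~3 to Type~1 plus explicit correction integrals that are shown to vanish as $t\to a^+$. The only differences are cosmetic---you use the substitution $u=\ln(t/\t)$ and integrability of $u^{1-\a}|\ln u|$ near $0$, whereas the paper integrates by parts for Type~1 and bounds the Type~2 integrand pointwise via an extremal argument; for Type~3 you invoke the mean-value inequality on $x(\t)-x(a)$ where the paper integrates by parts once more.
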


\begin{proof} For what concerns $\DI x(t)$, we have
$$\left|\DI x(t)\right|\leq\frac{\|x'\|}{\Gamma(1-\a)}\int_a^t\left(\ln\frac{t}{\t}\right)^{-\a}\frac{1}{\t}\cdot\t d\t.$$
Integrating by parts, we get
$$\left|\DI x(t)\right|\leq\frac{\|x'\|}{\Gamma(2-\a)}
\left[a\left(\ln\frac{t}{a}\right)^{1-\a}+\int_a^t\left(\ln\frac{t}{\t}\right)^{1-\a}d\t\right],$$
which vanishes at $t=a$. To prove that $\DII x(t)=0$ at $t=a$, using Theorem  \ref{relations}, is enough to prove that
$$\int_a^t\left(\ln\frac{t}{\t}\right)^{1-\a}d\t= \int_a^t\left(\ln\frac{t}{\t}\right)^{1-\a}\ln\left(\ln\frac{t}{\t}\right)d\t=0,$$
for $t=a$. With respect to the first integral, is obvious. For the second one, let
$$f(\t)=\left(\ln\frac{t}{\t}\right)^{1-\a}\ln\left(\ln\frac{t}{\t}\right), \quad \t\in[a,t[.$$
Since $f(\t)\to0$ as $\t\to t$, we can extend continuously $f$ to the closed interval $[a,t]$ by letting $f(t)=0$. Finding the extremals for $f$, we prove that for all  $\t\in[a,t]$,
$$\left|f(\t)\right|\leq \max\left\{\left(\ln\frac{t}{a}\right)^{1-\a}\left|\ln\left(\ln\frac{t}{a}\right)\right|, \frac{1}{e(1-\a)}\right\}.$$
With this we prove the second part. The last one is clear, using the second relation in Theorem  \ref{relations} and integration by parts.
\end{proof}

%

\section{Expansion formulas for the Caputo--Hadamard fractional derivatives}\label{sec:expansion}

Define the sequence $(x_k)_{k \in\mathbb N}$ recursively by the formula
$$x_1(t)=tx'(t) \quad \mbox{and} \quad x_{k+1}(t)= tx'_k(t), \, k \in\mathbb N.$$
Also, for $k\in\mathbb N$, define the quantities
$$\begin{array}{ll}
A_k & =\DS \frac{1}{\Gamma(k+1-\a)}\left[1+\sum_{p=n-k+1}^N \frac{\Gamma(\a-n+p)}{\Gamma(\a-k)(p-n+k)!}  \right],\\
B_k & =  \DS\frac{\Gamma(\a-n+k)}{\Gamma(1-\a)\Gamma(\a)(k-n)!},
\end{array}$$
and the function
$$V_k(t)=\int_{a}^{t}\left(\ln\frac{\t}{a}\right)^k x'(\t)d\t.$$

\begin{Theorem}\label{teo1} Let $x:[a,b]\to\mathbb R$ be a function of class $C^{n+1}$, for $n\in\mathbb N$, and fix $N\in\mathbb N$ with $N \geq n$.
Then,
$$\DI x(t) =\DS\sum_{k=1}^{n}A_k \left(\ln\frac{t}{a}\right)^{k-\a}x_k(t)+\sum_{k=n}^N B_k \left(\ln\frac{t}{a}\right)^{n-k-\a} V_{k-n}(t)+E(t),$$
with
$$ E(t)\leq \frac{(t-a)\exp((n-\a)^2+n-\a)}{\Gamma(n+1-\a)N^{n-\a}(n-\a)}\left(\ln\frac{t}{a}\right)^{n-\a}\max_{\t\in[a,t]}\left|x'_N(\t)\right|.$$
\end{Theorem}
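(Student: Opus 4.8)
The plan is to derive the whole expansion from a single generalized binomial expansion of the logarithmic kernel, integrated term by term, reading the two sums off as the truncated series and an integration-by-parts reorganization of the remainder. The orienting observation I would use first is that $\DI x(t)$ is the Hadamard fractional integral of order $1-\a$ of $x_1(\t)=\t x'(\t)$, since $x'(\t)\,d\t=x_1(\t)\,d\t/\t$; equivalently, the substitution $s=\ln(\t/a)$ turns $\DI x(t)$ into an ordinary left Caputo derivative of $s\mapsto x(ae^s)$ on $[0,\ln(t/a)]$, carrying each $x_k$ to an ordinary $k$-th derivative and each $V_{k-n}$ to an ordinary moment. This identifies the claim as the Caputo--Hadamard counterpart of the known Caputo expansion and pins down the elementary identities to target.

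Concretely, I would write $\left(\ln\frac t\t\right)^{-\a}=\left(\ln\frac ta\right)^{-\a}\left(1-\frac{\ln(\t/a)}{\ln(t/a)}\right)^{-\a}$ and expand the last factor by the binomial series; since $0\le \ln(\t/a)/\ln(t/a)<1$ on $(a,t)$ the series converges and may be integrated term by term against $x'(\t)\,d\t$. Each term yields a moment $V_p(t)=\int_a^t(\ln\frac\t a)^p x'(\t)\,d\t$ with a Gamma-quotient coefficient, and the block $p=0,\dots,N-n$ reproduces verbatim the memory sum $\sum_{k=n}^N B_k(\ln\frac ta)^{n-k-\a}V_{k-n}(t)$ under the relabelling $p=k-n$. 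The complementary (higher-order) part I would reorganize by integrating the moments by parts in $\t$ — their polynomial weight $(\ln\frac\t a)^p$ is regular at $\t=t$, so the boundary evaluations produce the upper-endpoint iterates $x_1(t),\dots,x_n(t)$ through $x'=x_1/\t$ and $(\,t\,d/dt)x_k=x_{k+1}$ — leaving the error integral. The only bookkeeping-heavy point is that the accumulated coefficients collapse to the stated $A_k$; this rests on the Beta-integral identity already used in Lemma~\ref{LemmaEx} and on the partial alternating binomial identity $\sum_{q=0}^{M}(-1)^q\binom{k-\a}{q}=(-1)^M\binom{k-\a-1}{M}$, which is exactly what turns the bracket $1+\sum_{p=n-k+1}^N$ defining $A_k$ into a single closed $\C$-type coefficient.

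The hard part will be the error estimate, where two things must be controlled simultaneously and \emph{explicitly} rather than asymptotically. First, the remainder is the tail of the binomial series beyond order $N$, an integral against $\sum_{p>N}\C\,(\cdots)$; since $\C=(-1)^p\Gamma(p-n+\a)/[\Gamma(\a-n)\,p!]$ decays like $p^{-(n-\a)-1}$, the sum $\sum_{p>N}|\C|$ is of order $N^{-(n-\a)}/(n-\a)$, which is what produces the factors $N^{-(n-\a)}$ and $1/(n-\a)$. Making this rigorous requires a uniform-in-$p$ bound on the ratio $\Gamma(p-n+\a)/\Gamma(p+1)$ (in place of its asymptotics), and it is such a crude but $N$-independent envelope that I expect to generate the constant $\exp((n-\a)^2+n-\a)$. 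Second, after the integrations by parts the remainder integrand carries the top iterate, so bounding it by $\max_{\t\in[a,t]}|x'_N(\t)|$ and estimating the leftover kernel $\int_a^t(\ln\frac t\t)^{\,n-\a}\,d\t\le (t-a)(\ln\frac ta)^{n-\a}$ (using $\ln\frac t\t\le\ln\frac ta$ and $n-\a>0$) yields the remaining factors $(t-a)$, $(\ln\frac ta)^{n-\a}$ and $1/\Gamma(n+1-\a)$. Assembling these into the single displayed inequality, and checking the smoothness bookkeeping so that $x'_N$ is the correct object to appear, is the delicate step; everything preceding it is a careful but mechanical expansion.
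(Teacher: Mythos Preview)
Your route is genuinely different from the paper's, and the difference matters for both the $A_k$ terms and the error. The paper does \emph{not} expand the kernel $\bigl(\ln\frac{t}{\t}\bigr)^{-\a}$ directly. It first integrates by parts $n$ times in the Hadamard sense, obtaining
\[
\DI x(t)=\sum_{k=1}^{n}\frac{1}{\Gamma(k+1-\a)}\left(\ln\frac{t}{a}\right)^{k-\a}x_k(a)
+\frac{1}{\Gamma(n+1-\a)}\int_a^t\left(\ln\frac{t}{\t}\right)^{n-\a}\frac{x_{n+1}(\t)}{\t}\,d\t,
\]
so that the kernel to be expanded has exponent $n-\a>0$. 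Only then is the binomial series applied, truncated at $N$, and followed by a second cascade of integrations by parts (now differentiating the weight and integrating $x'_k=x_{k+1}/\t$) that converts each $x_k(a)$ into $x_k(t)$ and accumulates precisely the bracket defining $A_k$. The $B_k$ sum is what survives this reverse cascade, not the first $N-n+1$ terms of a direct expansion.

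Your plan to expand $(1-u)^{-\a}$ from the outset yields the right $B_k$ coefficients for the moments $V_p$, but the step ``the complementary higher-order part, after integration by parts, produces $\sum_{k=1}^n A_k(\ln\frac ta)^{k-\a}x_k(t)$'' is the whole theorem and is left as an assertion. Concretely, integrating $V_p=\int_a^t(\ln\frac{\t}{a})^p\,x_1(\t)\,\tfrac{d\t}{\t}$ by parts does spit out a boundary term $\tfrac{1}{p+1}L^{p+1}x_1(t)$, but summing these over the infinite tail $p>N-n$ and showing the result collapses to the stated $A_k$ (which depend on $N$ through the partial binomial identity you mention) is nontrivial bookkeeping you have not carried out. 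More seriously, your error discussion switches silently to the coefficients $\C=\binom{n-\a}{p}$, which belong to the expansion of $(1-u)^{n-\a}$ and never appear in your direct expansion of $(1-u)^{-\a}$; the latter has coefficients $\binom{-\a}{p}(-1)^p=\Gamma(\a+p)/[\Gamma(\a)p!]\sim p^{\a-1}$, whose tail sum is not even finite, so the stated $N^{-(n-\a)}$ decay cannot be read off without first performing the $n$ integrations by parts that you were trying to avoid. In short, once you repair the error analysis you are forced back to the paper's order of operations; the paper's choice to raise the exponent before expanding is exactly what makes both the $A_k$ bookkeeping and the remainder estimate clean.
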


\begin{proof} Integrating by parts the integral
$$\DI x(t)=\frac{1}{\Gamma(1-\a)}\int_a^t\left(\ln\frac{t}{\t}\right)^{-\a}\frac{1}{\t}x_1(\t)d\t,$$
with
$$u'(\t)=\left(\ln\frac{t}{\t}\right)^{-\a}\frac{1}{\t} \quad \mbox{and} \quad  v(\t)=x_1(\t),$$
we get
$$\DI x(t)= \frac{1}{\Gamma(2-\a)}\left(\ln\frac{t}{a}\right)^{1-\a}x_1(a)+\frac{1}{\Gamma(2-\a)}\int_a^t\left(\ln\frac{t}{\t}\right)^{1-\a}\frac{1}{\t}x_2(\t)d\t.$$
If we proceed integrating by parts $n-1$ more times, the following formula is obtained:
$$\DI x(t) =\DS\sum_{k=1}^n \frac{1}{\Gamma(k+1-\a)}\left(\ln\frac{t}{a}\right)^{k-\a}x_k(a)+\frac{1}{\Gamma(n+1-\a)}
\int_a^t\left(\ln\frac{t}{\t}\right)^{n-\a}\frac{1}{\t}x_{n+1}(\t)d\t.$$
By the Taylor's Theorem, we obtain the sum
$$\begin{array}{ll}
\DS\left(\ln\frac{t}{\t}\right)^{n-\a}&=\DS\left(\ln\frac{t}{a}\right)^{n-\a}\left(1-\frac{\ln\frac{\t}{a}}{\ln\frac{t}{a}}\right)^{n-\a}\\
&=\DS\left(\ln\frac{t}{a}\right)^{n-\a}\sum_{p=0}^N \C (-1)^p \frac{\left(\ln\frac{\t}{a}\right)^p}{\left(\ln\frac{t}{a}\right)^p}+E_1(t),
\end{array}$$
where
$$E_1(t)=\left(\ln\frac{t}{a}\right)^{n-\a}\sum_{p=N+1}^\infty \C (-1)^p \frac{\left(\ln\frac{\t}{a}\right)^p}{\left(\ln\frac{t}{a}\right)^p}$$
and
$$\C (-1)^p=\frac{\Gamma(\a-n+p)}{\Gamma(\a-n)p!}.$$
Using this relation, we get the new formula
$$\begin{array}{ll}
\DI x(t)&=\DS\sum_{k=1}^n \frac{1}{\Gamma(k+1-\a)}\left(\ln\frac{t}{a}\right)^{k-\a}x_k(a)\\
& \quad \DS +\frac{1}{\Gamma(n+1-\a)}\left(\ln\frac{t}{a}\right)^{n-\a} \sum_{p=0}^N \frac{\Gamma(\a-n+p)}{\Gamma(\a-n)p!\left(\ln\frac{t}{a}\right)^p}\\
& \quad \DS \times\int_a^t \left(\ln\frac{\t}{a}\right)^p\frac{1}{\t}x_{n+1}(\t) d\t+E(t),
\end{array}$$
where
$$E(t)=\frac{1}{\Gamma(n+1-\a)}\int_a^t E_1(t)\frac{1}{\t}x_{n+1}(\t)d\t.$$
If we split the sum into the first term $p=0$ and the remaining ones $p=1\ldots N$, and use integration by parts taking
$$u(\t)=\left(\ln\frac{\t}{a}\right)^p\quad \mbox{and} \quad v'(\t)=\frac{1}{\t}x_{n+1}(\t)=x'_n(\t),$$
we get
$$\begin{array}{ll}
\DI x(t)&=\DS\sum_{k=1}^{n-1} \frac{1}{\Gamma(k+1-\a)}\left(\ln\frac{t}{a}\right)^{k-\a}x_k(a)+A_n\left(\ln\frac{t}{a}\right)^{n-\a}x_n(t)\\
& \quad \DS +\frac{1}{\Gamma(n-\a)}\left(\ln\frac{t}{a}\right)^{n-1-\a}\sum_{p=1}^N \frac{\Gamma(\a-n+p)}{\Gamma(\a+1-n)(p-1)!\left(\ln\frac{t}{a}\right)^{p-1}}\\
& \quad \DS \times\int_a^t \left(\ln\frac{\t}{a}\right)^{p-1}\frac{1}{\t}x_{n}(\t) d\t+E(t).
\end{array}$$
Observe that
$$\frac{1}{\t}x_{n}(\t)=x'_{n-1}(\t).$$
Repeating this procedure, i.e., splinting the second sum  (first term $p=k$ plus the remaining ones $p=k+1\ldots N$) and integrating by parts the integral that appears in the sum $p=k+1\ldots N$, we obtain the desired the formula.
For the error, observe that for $\t\in[a,t]$, we have
$$0\leq  \frac{\left(\ln\frac{\t}{a}\right)^p}{\left(\ln\frac{t}{a}\right)^p}\leq 1.$$
Thus,
$$\begin{array}{ll}
\left|E_1(t)\right|&\DS\leq \left(\ln\frac{t}{a}\right)^{n-\a}\sum_{p=N+1}^\infty \frac{\exp((n-\a)^2+n-\a)}{p^{n+1-\a}}\\
&\DS\leq \left(\ln\frac{t}{a}\right)^{n-\a}\int_N^\infty \frac{\exp((n-\a)^2+n-\a)}{p^{n+1-\a}}\, dp\\
&\DS= \left(\ln\frac{t}{a}\right)^{n-\a}\frac{\exp((n-\a)^2+n-\a)}{N^{n-\a}(n-\a)}.\\
\end{array}$$
The rest of the proof follows immediately.
\end{proof}

Observe that, for $N$ sufficiently large, we have the approximation
\begin{equation}\label{approx1}
\DI x(t) \approx\sum_{k=1}^{n}A_k \left(\ln\frac{t}{a}\right)^{k-\a}x_k(t)+\sum_{k=n}^N B_k \left(\ln\frac{t}{a}\right)^{n-k-\a} V_{k-n}(t).
\end{equation}

\begin{Theorem}\label{teo2} Let $x:[a,b]\to\mathbb R$ be a function of class $C^{n+1}$, for $n\in\mathbb N$, and fix $N\in\mathbb N$ with $N \geq n$.
Then,
$$\begin{array}{ll}
\DII x(t)& =\DS\sum_{k=1}^{n}A_k \left(\ln\frac{t}{a}\right)^{k-\a}x_k(t)+\sum_{k=n}^N B_k \left(\ln\frac{t}{a}\right)^{n-k-\a} V_{k-n}(t)
+\frac{t\Da}{\Gamma(2-\a)}\left(\ln\frac{t}{a}\right)^{1-\a}\\
&  \quad \DS \times\left[\left(\frac{1}{1-\a}-\ln\left(\ln\frac{t}{a}\right)\right)\sum_{p=0}^N\D\frac{(-1)^p}{ \left(\ln\frac{t}{a}\right)^{p}} V_{p}(t)\right.\\
& \quad \quad \quad \DS \left.+\sum_{p=0}^N\D(-1)^p\sum_{r=1}^N\frac{1}{r \left(\ln\frac{t}{a}\right)^{p+r}} V_{p+r}(t)\right]+E(t),
\end{array}$$
with
$$ \begin{array}{ll}
E(t)&\leq \DS \frac{(t-a)\exp((n-\a)^2+n-\a)}{\Gamma(n+1-\a)N^{n-\a}(n-\a)}\left(\ln\frac{t}{a}\right)^{n-\a}\max_{\t\in[a,t]}\left|x'_N(\t)\right|\\
&\quad \DS+\frac{(t-a)t\left|\Da\right|\exp((1-\a)^2+1-\a)}{\Gamma(2-\a)N^{1-\a}(1-\a)}\left(\ln\frac{t}{a}\right)^{1-\a}\max_{\t\in[a,t]}\left|x'(\t)\right|\\
&\quad \times \DS
\left|\frac{1}{1-\a}-\ln\left(\ln\frac{t}{a}\right)+\frac{(2t-a)\ln\frac{t}{a}}{N}\right|.
\end{array}$$
\end{Theorem}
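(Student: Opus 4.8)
The plan is to reduce everything to Theorem~\ref{teo1} by means of the first relation in Theorem~\ref{relations},
$$\DII x(t)=\DI x(t)+\frac{t\Da}{\Gamma(2-\a)}\int_a^t\left(\ln\frac{t}{\t}\right)^{1-\a}x'(\t)\left[\frac{1}{1-\a}-\ln\left(\ln\frac{t}{\t}\right)\right]d\t.$$
Applying Theorem~\ref{teo1} to $\DI x(t)$ immediately produces the two sums $\sum_{k=1}^n A_k(\cdots)x_k(t)$ and $\sum_{k=n}^N B_k(\cdots)V_{k-n}(t)$ together with the first summand of the error bound. Hence the only genuinely new work is to expand the integral that multiplies $t\Da/\Gamma(2-\a)$.

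For that integral I would factor $\ln\frac{t}{\t}=\left(\ln\frac{t}{a}\right)\left(1-\frac{\ln\frac{\t}{a}}{\ln\frac{t}{a}}\right)$ inside both occurrences. The power is expanded by the same binomial (Taylor) argument used in Theorem~\ref{teo1}, but now with exponent $1-\a$, so that
$$\left(\ln\frac{t}{\t}\right)^{1-\a}=\left(\ln\frac{t}{a}\right)^{1-\a}\sum_{p=0}^N\D(-1)^p\frac{\left(\ln\frac{\t}{a}\right)^p}{\left(\ln\frac{t}{a}\right)^p}+E_2(\t),$$
where $E_2$ is the binomial tail. The logarithmic factor is treated with $\ln\left(\ln\frac{t}{\t}\right)=\ln\left(\ln\frac{t}{a}\right)+\ln\left(1-\frac{\ln\frac{\t}{a}}{\ln\frac{t}{a}}\right)$ and the Mercator series $\ln(1-u)=-\sum_{r\geq1}u^r/r$, giving
$$\frac{1}{1-\a}-\ln\left(\ln\frac{t}{\t}\right)=\frac{1}{1-\a}-\ln\left(\ln\frac{t}{a}\right)+\sum_{r=1}^N\frac{1}{r}\frac{\left(\ln\frac{\t}{a}\right)^r}{\left(\ln\frac{t}{a}\right)^r}+E_3(\t),$$
with $E_3$ the logarithmic tail. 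Multiplying the two truncated expansions, inserting $x'(\t)$, and integrating, I would recognize $\int_a^t\left(\ln\frac{\t}{a}\right)^m x'(\t)\,d\t=V_m(t)$: the diagonal $p$-terms produce $V_p(t)$ and the cross $p,r$-terms produce $V_{p+r}(t)$. Reinstating the prefactor $\frac{t\Da}{\Gamma(2-\a)}\left(\ln\frac{t}{a}\right)^{1-\a}$ then reproduces verbatim the bracketed double sum in the statement.

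The main obstacle is the error estimate, which collects the contribution $gh-g_Nh_N=E_2\cdot h+g_N\cdot E_3$ (writing $g,h$ for the two factors and $g_N,h_N$ for their truncations). The binomial tail $E_2$ is bounded exactly as $E_1$ in Theorem~\ref{teo1} specialized to $n=1$, namely by $\left(\ln\frac{t}{a}\right)^{1-\a}\exp((1-\a)^2+1-\a)/(N^{1-\a}(1-\a))$; paired with the $\t$-independent part $\frac{1}{1-\a}-\ln\left(\ln\frac{t}{a}\right)$ of the logarithmic factor it accounts for the leading piece inside the absolute value. For the logarithmic tail I would use $0\le\left(\ln\frac{\t}{a}\right)/\left(\ln\frac{t}{a}\right)\le1$ on $[a,t]$ and $\frac{1}{r}\le\frac{1}{N}$ for $r>N$ to control $E_3$ by a geometric-type sum, whose integration against $x'$ yields the correction $\frac{(2t-a)\ln\frac{t}{a}}{N}$. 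Combining the two pieces under a single $\max_{\t\in[a,t]}|x'(\t)|$ and the factor $(t-a)$ coming from the outer integration gives the stated bound. I expect the bookkeeping that merges the binomial-tail and log-tail estimates into the single absolute value $\left|\frac{1}{1-\a}-\ln\left(\ln\frac{t}{a}\right)+\frac{(2t-a)\ln\frac{t}{a}}{N}\right|$, while keeping the common prefactor exact, to be the most delicate step.
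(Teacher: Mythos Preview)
Your plan is the paper's proof: reduce via Theorem~\ref{relations}, invoke Theorem~\ref{teo1} for $\DI x(t)$, expand $\left(\ln\frac{t}{\tau}\right)^{1-\a}$ by the binomial series and $\ln\left(\ln\frac{t}{\tau}\right)$ by the Mercator series, and read off the $V_m$ integrals; the error is then assembled from the two tails exactly as you describe.

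One technical caveat on the logarithmic tail. The pointwise route your sketch suggests, namely $E_3(\tau)\le \frac{1}{N}\sum_{r>N}u^r=\frac{u^{N+1}}{N(1-u)}$ with $u=\ln(\tau/a)/\ln(t/a)$, is \emph{not} integrable on $[a,t]$: near $\tau=t$ one has $1/(1-u)=\ln(t/a)/\ln(t/\tau)\sim t/(t-\tau)$, so this bound cannot produce the factor $(2t-a)\ln(t/a)/N$. The paper instead integrates the tail term by term,
\[
\int_a^t E_3(\tau)\,d\tau=\sum_{r>N}\frac{1}{r\,(\ln t/a)^r}\int_a^t\left(\ln\frac{\tau}{a}\right)^r d\tau,
\]
and applies one integration by parts to each inner integral to gain an additional factor $1/(r+1)$; the resulting sum $\sum_{r>N}\frac{1}{r(r+1)}\le 1/N$ is what yields the constant $(2t-a)\ln(t/a)/N$. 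With this adjustment your argument goes through and coincides with the paper's.
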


\begin{proof} Recalling Theorem \ref{relations}, we have
$$\DII x(t)=\DI x(t)+\frac{t\Da}{\Gamma(2-\a)}\int_a^t\left(\ln\frac{t}{\t}\right)^{1-\a}x'(\t)\left[\frac{1}{1-\a}-\ln\left(\ln\frac{t}{\t}\right)\right]d\t,$$
and so we only need to expand the second term in the right side:
\begin{equation}\label{eq1}\frac{t\Da}{\Gamma(2-\a)}\int_a^t\left(\ln\frac{t}{\t}\right)^{1-\a}x'(\t)
\left[\frac{1}{1-\a}-\ln\left(\ln\frac{t}{\t}\right)\right]d\t.\end{equation}
On one hand, as was seen in the proof of Theorem \ref{teo1}, we have
$$\left(\ln\frac{t}{\t}\right)^{1-\a}=\left(\ln\frac{t}{a}\right)^{1-\a}\left[\sum_{p=0}^N \D (-1)^p \frac{\left(\ln\frac{\t}{a}\right)^p}{\left(\ln\frac{t}{a}\right)^p}+E_1(t)\right],$$
where
$$E_1(t)=\sum_{p=N+1}^\infty \D (-1)^p \frac{\left(\ln\frac{\t}{a}\right)^p}{\left(\ln\frac{t}{a}\right)^p}.$$
On the other hand,
$$\begin{array}{ll}
\DS\ln\left(\ln\frac{t}{\t}\right)&=\DS\ln\left(\ln\frac{t}{a}\right)+\ln\left(1-\frac{\ln\frac{\t}{a}}{\ln\frac{t}{a}}\right)\\
&=\DS\ln\left(\ln\frac{t}{a}\right)-\sum_{r=1}^N  \frac{\left(\ln\frac{\t}{a}\right)^r}{r\left(\ln\frac{t}{a}\right)^r} -E_2(t),
\end{array}$$
where
$$E_2(t)=\sum_{r=N+1}^\infty  \frac{\left(\ln\frac{\t}{a}\right)^r}{r\left(\ln\frac{t}{a}\right)^r}.$$
Substituting these two relations in Eq. \eqref{eq1}, we obtain
$$\frac{t\Da}{\Gamma(2-\a)}\left(\ln\frac{t}{a}\right)^{1-\a}$$
$$\times\left[\left(\frac{1}{1-\a}-\ln\left(\ln\frac{t}{a}\right)\right)
\sum_{p=0}^N\D\frac{(-1)^p}{\left(\ln\frac{t}{a}\right)^p} V_p(t)+\sum_{p=0}^N\D(-1)^p\sum_{r=1}^N\frac{1}{r\left(\ln\frac{t}{a}\right)^{p+r}} V_{p+r}(t)\right]$$
$$+\frac{t\Da}{\Gamma(2-\a)}\left(\ln\frac{t}{a}\right)^{1-\a}\times\left[\left(\frac{1}{1-\a}-\ln\left(\ln\frac{t}{a}\right)\right)
\int_a^t E_1(t)x'(\t)\,d\t+\int_a^t E_1(t)E_2(t)x'(\t)\,d\t\right].$$
We now determine the upper bound for the error. As was seen in proof of Theorem \ref{teo1}, we have
$$\left|E_1(t)\right|\leq \frac{\exp((1-\a)^2+1-\a)}{N^{1-\a}(1-\a)}.$$
On the other hand,
$$\int_a^t E_2(t) \, d\t=\sum_{r=N+1}^\infty \frac{1}{r\left(\ln\frac{t}{a}\right)^r}\int_a^t \t \cdot \left(\ln\frac{\t}{a}\right)^r\frac{d\t}{\t}.$$
Integrating by parts,
$$\begin{array}{ll}
\DS\int_a^t E_2(t) \, d\t & \DS = \sum_{r=N+1}^\infty \frac{1}{r(r+1)}\left[ t\ln\frac{t}{a}-
\int_a^t \frac{\left(\ln\frac{\t}{a}\right)^{r+1}}{\left(\ln\frac{t}{a}\right)^{r}}d\t\right]\\
& \DS \leq \sum_{r=N+1}^\infty \frac{1}{r(r+1)}\left[ t\ln\frac{t}{a}+\int_a^t \ln\frac{t}{a} d\t\right]\\
& \DS \leq (2t-a)\ln\frac{t}{a}\int_N^\infty \frac{1}{r^2}\,dr=\frac{(2t-a)\ln\frac{t}{a}}{N}.
\end{array}$$
Combining these relations, we prove the result.
\end{proof}

Finally, we have the expansion formula for the Caputo--Hadamard fractional derivative of order $\a$ type 3.

\begin{Theorem}\label{teo3} Let $x:[a,b]\to\mathbb R$ be a function of class $C^{n+1}$, for $n\in\mathbb N$, and fix $N\in\mathbb N$ with $N \geq n$.
Then,
$$\begin{array}{ll}
\DIII x(t)& =\DS\sum_{k=1}^{n}A_k \left(\ln\frac{t}{a}\right)^{k-\a}x_k(t)+\sum_{k=n}^N B_k \left(\ln\frac{t}{a}\right)^{n-k-\a} V_{k-n}(t)
+\frac{t\Da}{\Gamma(2-\a)}\left(\ln\frac{t}{a}\right)^{1-\a}\\
&  \quad \DS \times\left[\left(\psi(2-\a)-\ln\left(\ln\frac{t}{a}\right)\right)\sum_{p=0}^N\D\frac{(-1)^p}{ \left(\ln\frac{t}{a}\right)^{p}} V_{p}(t)\right.\\
& \quad \quad \quad \DS \left.+\sum_{p=0}^N\D(-1)^p\sum_{r=1}^N\frac{1}{r \left(\ln\frac{t}{a}\right)^{p+r}} V_{p+r}(t)\right]+E(t),
\end{array}$$
with
$$ \begin{array}{ll}
E(t)&\leq \DS \frac{(t-a)\exp((n-\a)^2+n-\a)}{\Gamma(n+1-\a)N^{n-\a}(n-\a)}\left(\ln\frac{t}{a}\right)^{n-\a}\max_{\t\in[a,t]}\left|x'_N(\t)\right|\\
&\quad \DS+\frac{(t-a)t\left|\Da\right|\exp((1-\a)^2+1-\a)}{\Gamma(2-\a)N^{1-\a}(1-\a)}\left(\ln\frac{t}{a}\right)^{1-\a}\max_{\t\in[a,t]}\left|x'(\t)\right|\\
&\quad \times \DS
\left|\psi(2-\a)-\ln\left(\ln\frac{t}{a}\right)+\frac{(2t-a)\ln\frac{t}{a}}{N}\right|.
\end{array}$$
\end{Theorem}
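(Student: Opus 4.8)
The plan is to build directly on Theorem~\ref{teo2}, exploiting the second relation of Theorem~\ref{relations}, which already expresses the type~3 operator as the type~2 operator plus a single correction integral:
$$\DIII x(t)=\DII x(t)+\frac{t\Da\psi(1-\a)}{\Gamma(1-\a)}\int_a^t\left(\ln\frac{t}{\t}\right)^{-\a}\frac{x(\t)-x(a)}{\t}d\t.$$
Since the expansion of $\DII x(t)$ is in hand, the entire task reduces to expanding this one correction term, an object of exactly the same shape as those already treated in the proofs of Theorems~\ref{teo1} and~\ref{teo2}.

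First I would integrate the correction integral by parts with $u'(\t)=\left(\ln\frac{t}{\t}\right)^{-\a}/\t$ and $v(\t)=x(\t)-x(a)$. The boundary contributions vanish (at $\t=t$ because $u$ does, at $\t=a$ because $v$ does), and using $(1-\a)\Gamma(1-\a)=\Gamma(2-\a)$ this turns the correction into
$$\frac{t\Da\psi(1-\a)}{\Gamma(2-\a)}\int_a^t\left(\ln\frac{t}{\t}\right)^{1-\a}x'(\t)d\t.$$
Next I would reuse, verbatim, the Taylor expansion of $\left(\ln\frac{t}{\t}\right)^{1-\a}$ recorded in the proof of Theorem~\ref{teo2},
$$\left(\ln\frac{t}{\t}\right)^{1-\a}=\left(\ln\frac{t}{a}\right)^{1-\a}\left[\sum_{p=0}^N\D(-1)^p\frac{\left(\ln\frac{\t}{a}\right)^p}{\left(\ln\frac{t}{a}\right)^p}+E_1(t)\right],$$
and recognise $\int_a^t\left(\ln\frac{\t}{a}\right)^p x'(\t)d\t=V_p(t)$. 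The correction term then becomes $\frac{t\Da\psi(1-\a)}{\Gamma(2-\a)}\left(\ln\frac{t}{a}\right)^{1-\a}\sum_{p=0}^N\D(-1)^p V_p(t)/\left(\ln\frac{t}{a}\right)^p$ plus a remainder governed solely by $E_1$.

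The decisive algebraic step is the merging of constants. Adding the expression just obtained to the expansion of Theorem~\ref{teo2}, the coefficient multiplying $\sum_{p=0}^N\D(-1)^p V_p(t)/\left(\ln\frac{t}{a}\right)^p$ becomes $\frac{1}{1-\a}+\psi(1-\a)-\ln\left(\ln\frac{t}{a}\right)$, and the digamma recurrence $\psi(2-\a)=\psi(1-\a)+\frac{1}{1-\a}$ collapses it to $\psi(2-\a)-\ln\left(\ln\frac{t}{a}\right)$, precisely the coefficient claimed; the double sum over $p$ and $r$ is inherited unchanged from Theorem~\ref{teo2}, since the correction term carries no $\ln\left(\ln\frac{t}{\t}\right)$ factor and hence produces no $E_2$ contribution. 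For the error I would add the $E_1$-remainder of the new term to the second block of the error in Theorem~\ref{teo2}: both share the prefactor $\frac{t|\Da|}{\Gamma(2-\a)}\left(\ln\frac{t}{a}\right)^{1-\a}$ and the bound $|E_1(t)|\le \exp((1-\a)^2+1-\a)/(N^{1-\a}(1-\a))$, so the very same recurrence replaces $\frac{1}{1-\a}$ by $\psi(2-\a)$ inside the absolute value, while the $(2t-a)\ln\frac{t}{a}/N$ term, coming from $\int_a^t E_2(t)d\t$, survives untouched.

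The main obstacle I anticipate is not conceptual but a matter of careful bookkeeping in the error estimate: one must verify that the new $E_1$-remainder enters with the same sign and prefactor as the $\frac{1}{1-\a}$-term it is meant to join, so that the two constants genuinely add \emph{before} the absolute value is formed. Once this is checked, the clean substitution of $\psi(2-\a)$ for $\frac{1}{1-\a}$ in both the expansion and the error bound is legitimate, and the rest of the argument is identical to that of Theorem~\ref{teo2}.
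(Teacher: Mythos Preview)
Your proposal is correct and follows essentially the same route as the paper's own proof: invoke Theorem~\ref{relations} to reduce to Theorem~\ref{teo2} plus the single correction integral, integrate that correction by parts to bring out the factor $\left(\ln\frac{t}{\t}\right)^{1-\a}$, expand via the same Taylor series used in Theorem~\ref{teo2}, and then apply the digamma recurrence $\psi(1-\a)+\frac{1}{1-\a}=\psi(2-\a)$ to merge the coefficients. Your treatment of the error bookkeeping is in fact more explicit than the paper's, which simply states that the result follows.
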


\begin{proof}  By Theorem \ref{relations}, we only need to expand the term
$$\frac{t\Da\psi(1-\a)}{\Gamma(1-\a)}\int_a^t\left(\ln\frac{t}{\t}\right)^{-\a}\frac{x(\t)-x(a)}{\t}d\t.$$
Integrating by parts, we get
$$\frac{t\Da\psi(1-\a)}{\Gamma(2-\a)}\int_a^t\left(\ln\frac{t}{\t}\right)^{1-\a}x'(\t)d\t.$$
Using Taylor's Theorem on the term $\left(\ln\frac{t}{\t}\right)^{1-\a}$, combining with Theorem \ref{teo2} and using the formula
$$\psi(1-\a)+\frac{1}{1-\a}=\psi(2-\a),$$
we prove the result.
\end{proof}

Similar formulas are obtained for the right Caputo--Hadamard fractional derivatives. Letting, for $k\in\mathbb N$,
$$\begin{array}{ll}
\overline A_k & =\DS \frac{(-1)^k}{\Gamma(k+1-\a)}\left[1+\sum_{p=n-k+1}^N \frac{\Gamma(\a-n+p)}{\Gamma(\a-k)(p-n+k)!}  \right],\\
\overline B_k & =  \DS\frac{-\Gamma(\a-n+k)}{\Gamma(1-\a)\Gamma(\a)(k-n)!},
\end{array}$$
and
$$\overline V_k(t)=\int_{t}^{b}\left(\ln\frac{b}{\t}\right)^k x'(\t)d\t,$$
we have the  next three approximation formulas:
$$\DIR x(t)\approx\DS\sum_{k=1}^{n}\overline A_k \left(\ln\frac{b}{t}\right)^{k-\a}x_k(t)+\sum_{k=n}^N \overline B_k \left(\ln\frac{b}{t}\right)^{n-k-\a} \overline V_{k-n}(t),$$
$$\DIIR x(t)\approx\DS\sum_{k=1}^{n}\overline A_k \left(\ln\frac{b}{t}\right)^{k-\a}x_k(t)+\sum_{k=n}^N \overline B_k \left(\ln\frac{b}{t}\right)^{n-k-\a} \overline V_{k-n}(t)+\frac{t\Da}{\Gamma(2-\a)}\left(\ln\frac{b}{t}\right)^{1-\a}$$
$$ \times\left[\left(\frac{1}{1-\a}-\ln\left(\ln\frac{b}{t}\right)\right)\sum_{p=0}^N\D\frac{(-1)^p}{ \left(\ln\frac{b}{t}\right)^{p}} \overline V_{p}(t)+\sum_{p=0}^N\D(-1)^p\sum_{r=1}^N\frac{1}{r \left(\ln\frac{b}{t}\right)^{p+r}}\overline V_{p+r}(t)\right],$$
and
$$\DIIIR x(t)\approx\DS\sum_{k=1}^{n}\overline A_k \left(\ln\frac{b}{t}\right)^{k-\a}x_k(t)+\sum_{k=n}^N \overline B_k \left(\ln\frac{b}{t}\right)^{n-k-\a} \overline V_{k-n}(t)+\frac{t\Da}{\Gamma(2-\a)}\left(\ln\frac{b}{t}\right)^{1-\a}$$
$$ \times\left[\left(\psi(2-\a)-\ln\left(\ln\frac{b}{t}\right)\right)\sum_{p=0}^N\D\frac{(-1)^p}{ \left(\ln\frac{b}{t}\right)^{p}} \overline V_{p}(t)+\sum_{p=0}^N\D(-1)^p\sum_{r=1}^N\frac{1}{r \left(\ln\frac{b}{t}\right)^{p+r}}\overline V_{p+r}(t)\right].$$

\section{Examples}
\label{sec:EX}

In this section we test the efficiency of the purposed method, by comparing the exact fractional derivative of the function
$$\overline x(t)=\ln t, \quad \mbox{for} \quad t\in[1,5],$$
with some numerical approximations. We fix the order $\a=t/20$, and for the approximations using Theorems \ref{teo1}, \ref{teo2} and \ref{teo3}, we take $n=1$ and vary $N\in\{10,20,30\}$. The error of approximation at the point $t_0$ is given by the absolute value of the difference between the exact and the approximation at $t=t_0$. Below we give the exact fractional derivatives of $\overline x$, obtained by Lemma \ref{LemmaEx}
$$\begin{array}{ll}
{_1\mathbb{D}_t^\a} \overline x(t) &=
\DS \frac{1}{\Gamma\left(2-\a\right)}(\ln t)^{1-\a}\\
{_1D_t^\a} \overline x(t) &=
\DS \frac{1}{\Gamma\left(2-\a\right)}(\ln t)^{1-\a}
 -\frac{t\Da}{\Gamma\left(3-\a\right)}(\ln t)^{2-\a}\\
 &\quad \DS\times\left[\ln(\ln t)+\psi\left(1-\a\right)
 -\psi\left(3-\a\right)\right],\\
{_1\mathcal{D}_t^\a} \overline x(t) &=
\DS \frac{1}{\Gamma\left(2-\a\right)}(\ln t)^{1-\a}
 -\frac{t\Da}{\Gamma\left(3-\a\right)}(\ln t)^{2-\a}\left[\ln(\ln t)-\psi\left(3-\a\right)\right].\\
\end{array}$$

\begin{figure}[!ht]
\begin{center}
\subfigure[${_1\mathbb{D}_t^\a}  \overline x(t)$]{\label{PlotI}\includegraphics[scale=0.25]{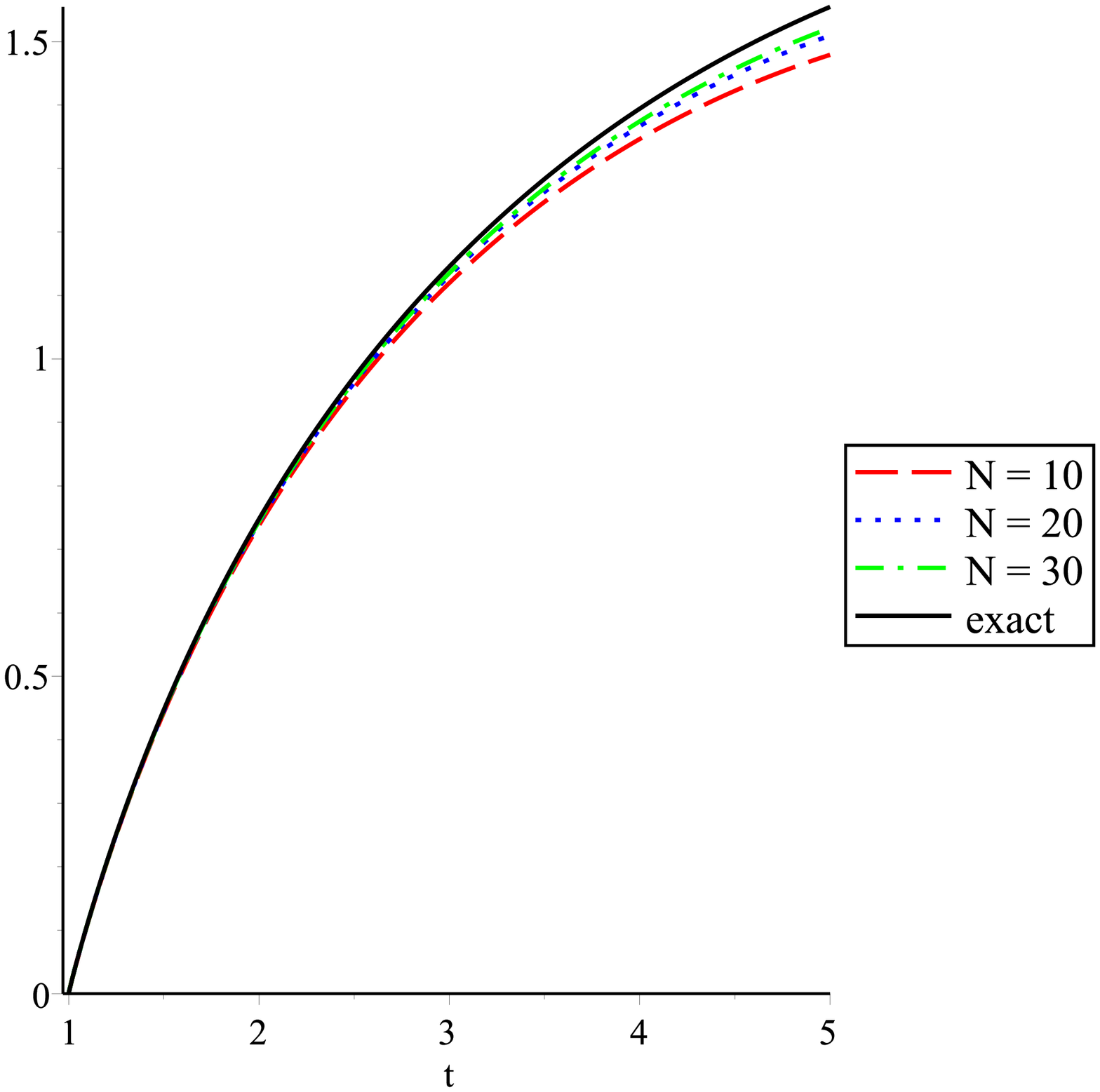}}\hspace{1cm}
\subfigure[Error]{\label{ExpExn}\includegraphics[scale=0.25]{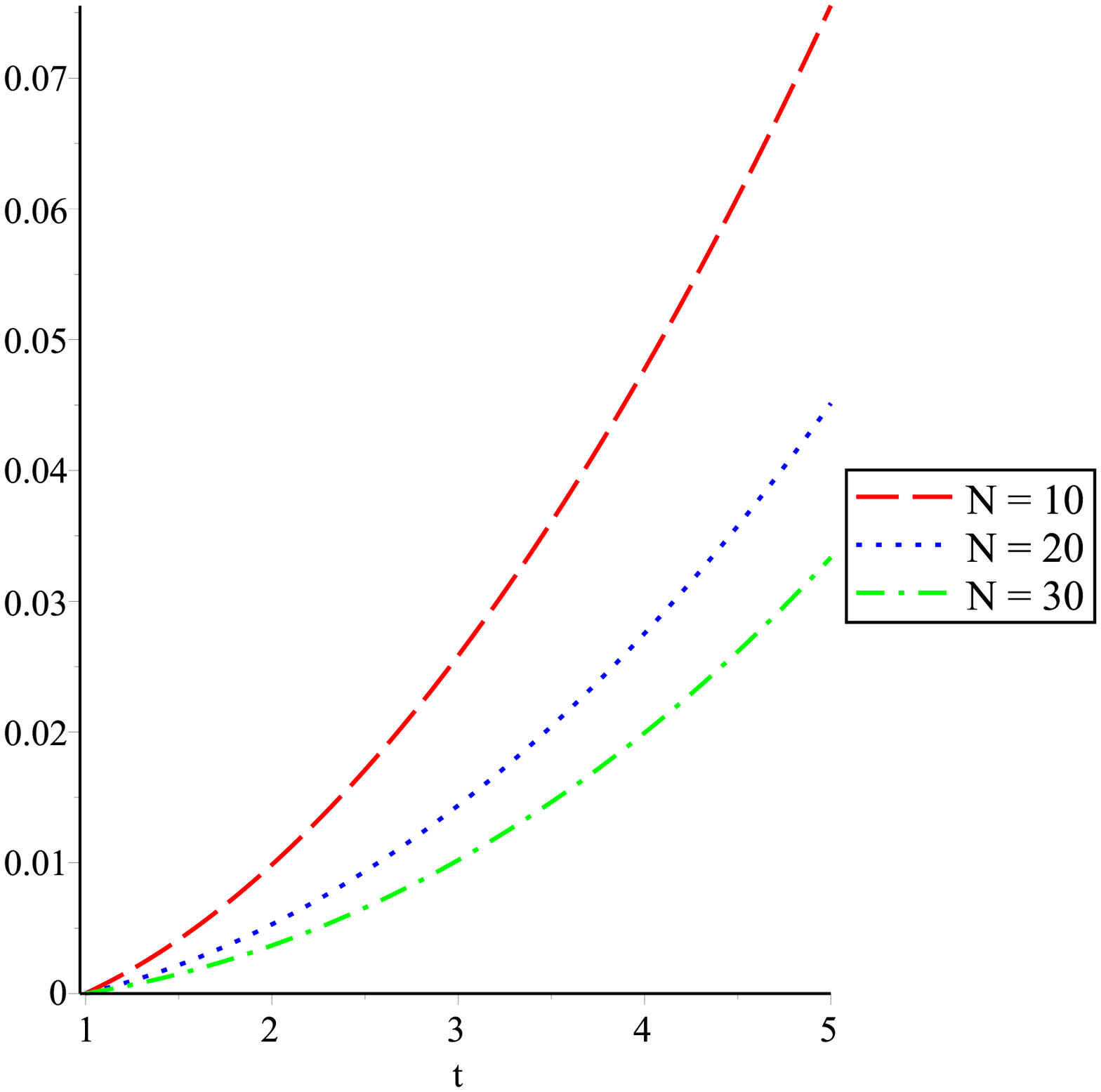}}
\end{center}
\caption{Analytic versus numerical approximations by Theorem \ref{teo1}.}
\label{IntExp}
\end{figure}

\begin{figure}[!ht]
\begin{center}
\subfigure[${_1D_t^\a}  \overline x(t)$]{\label{PlotI}\includegraphics[scale=0.25]{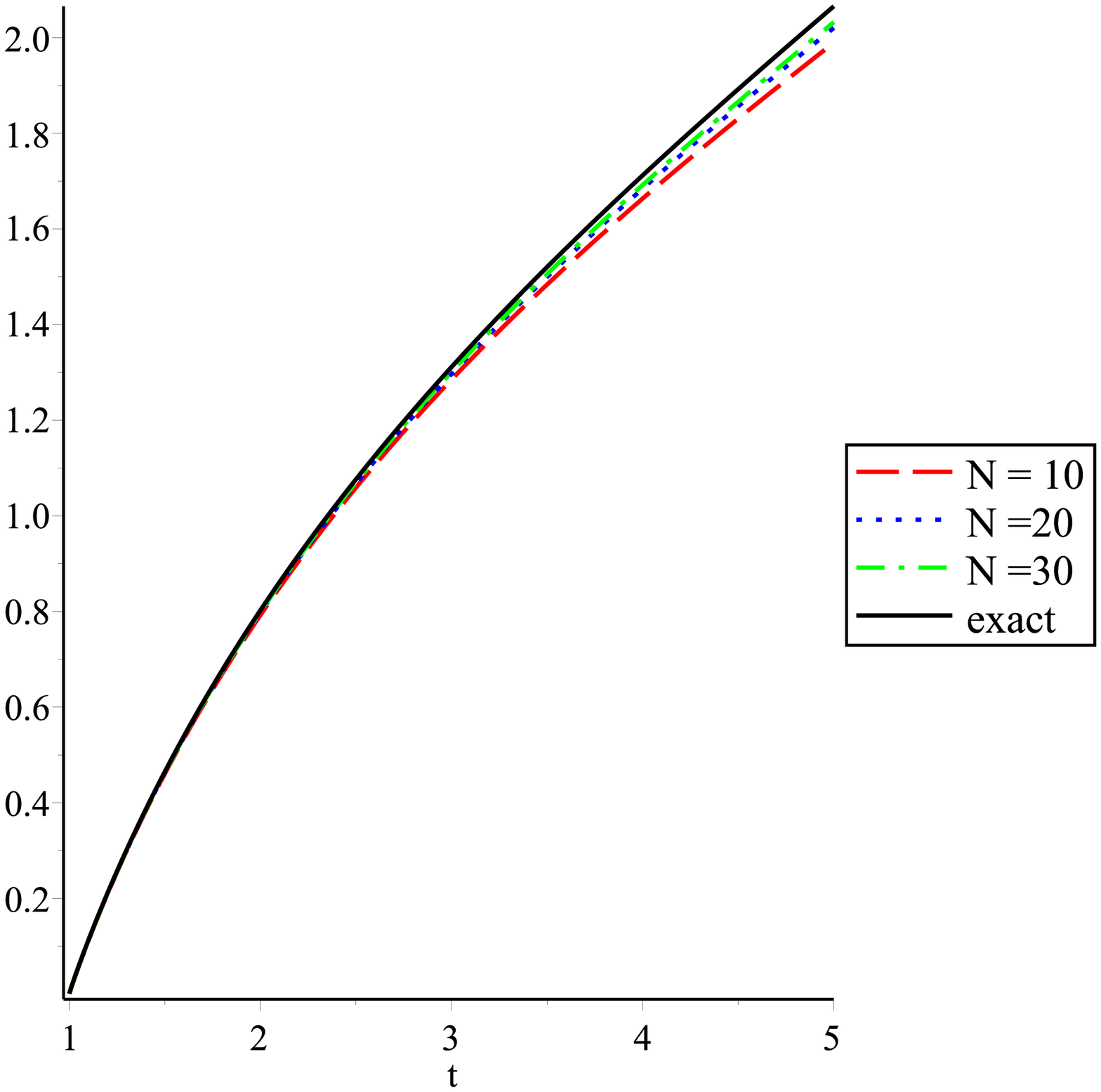}} \hspace{1cm}
\subfigure[Error]{\label{ExpExn}\includegraphics[scale=0.25]{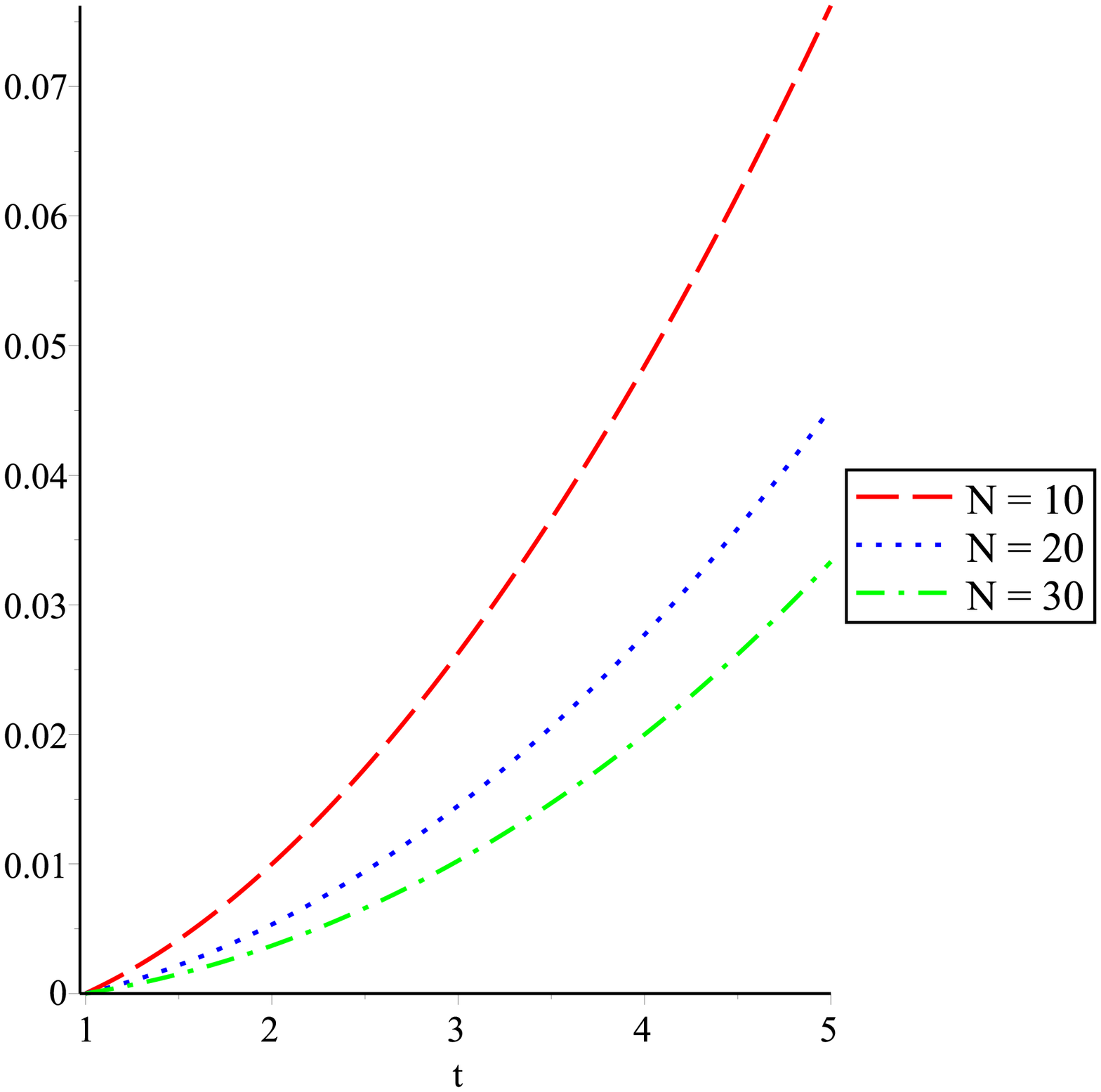}}
\end{center}
\caption{Analytic versus numerical approximations by Theorem \ref{teo2}.}
\label{IntExp}
\end{figure}

\begin{figure}[!ht]
\begin{center}
\subfigure[${_1\mathcal{D}_t^\a}  \overline x(t)$]{\label{PlotI}\includegraphics[scale=0.25]{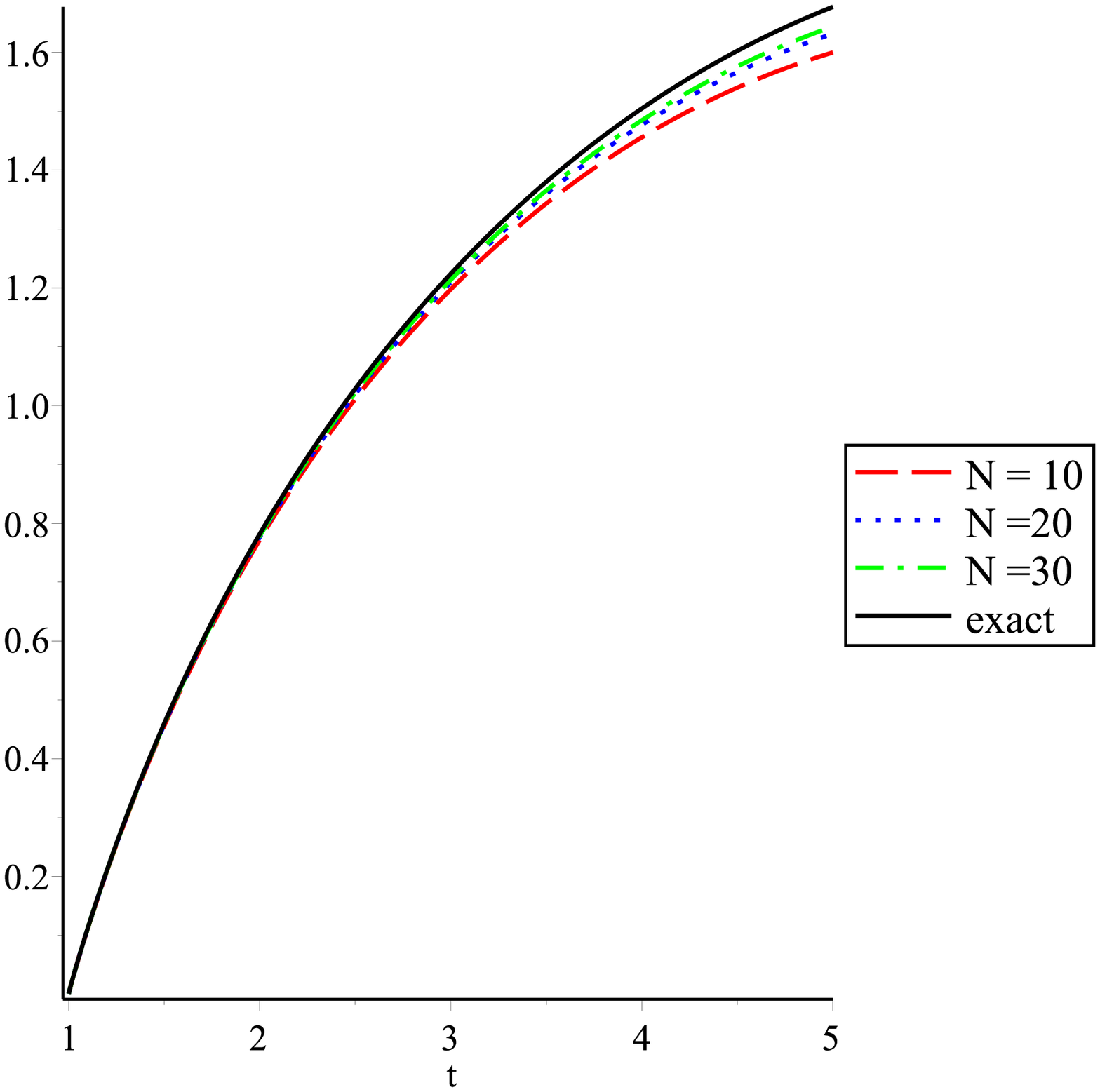}}\hspace{1cm}
\subfigure[Error]{\label{ExpExn}\includegraphics[scale=0.25]{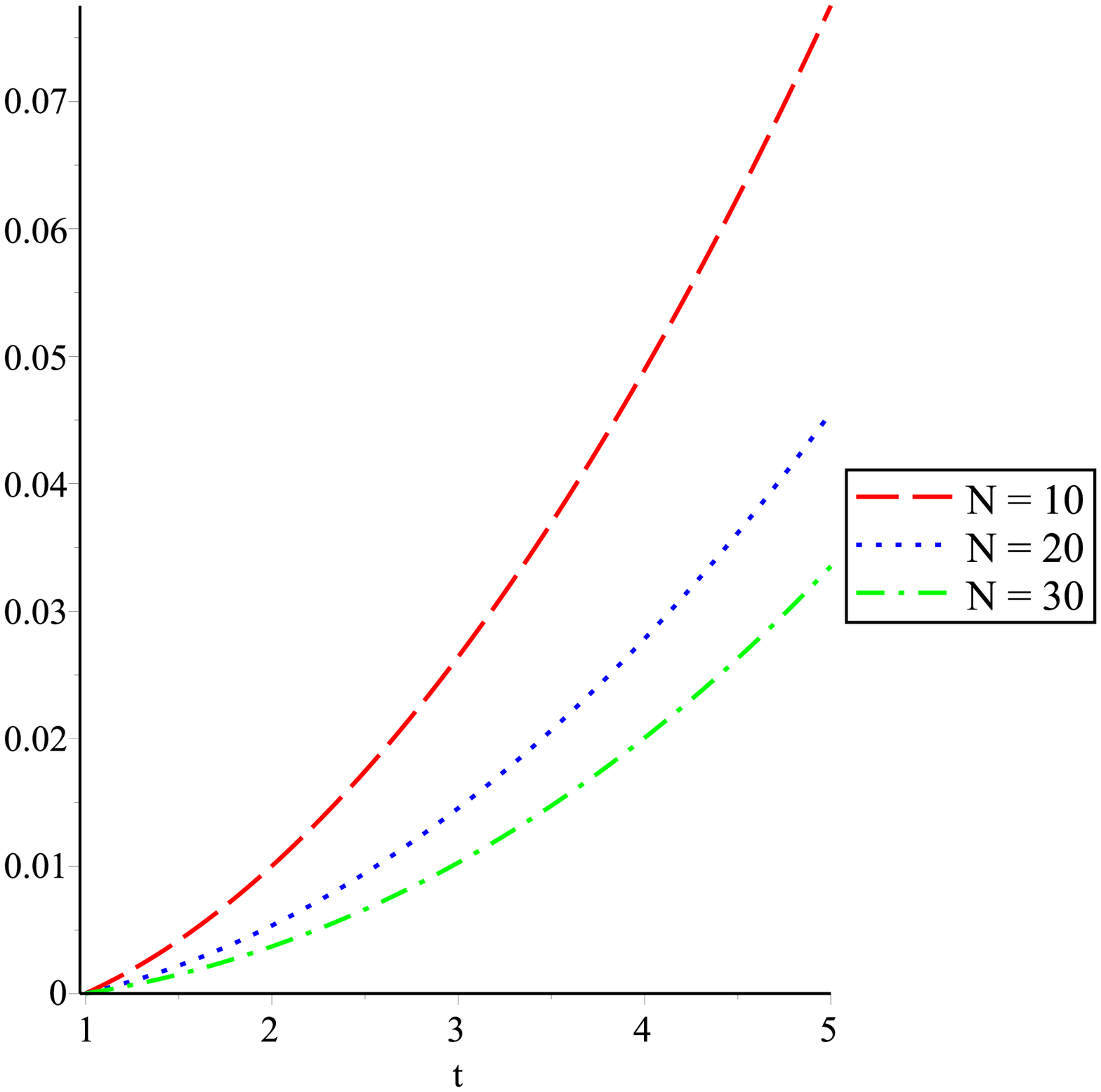}}
\end{center}
\caption{Analytic versus numerical approximations by Theorem \ref{teo3}.}
\label{IntExp}
\end{figure}

For the  right Caputo--Hadamard fractional derivatives, we take
$$\overline y(t)=\ln\frac5t, \quad \mbox{for} \quad t\in[1,5],$$
$n=1$ and $N\in\{2,4,6\}$. The fractional derivatives are in this case given by the expressions
$$\begin{array}{ll}
{_t\mathbb{D}_5^\a} \overline y(t) &=
\DS \frac{1}{\Gamma\left(2-\a\right)}\left(\ln \frac5t\right)^{1-\a}\\
{_tD_5^\a} \overline y(t) &=
\DS \frac{1}{\Gamma\left(2-\a\right)}\left(\ln \frac5t\right)^{1-\a}
 +\frac{t\Da}{\Gamma\left(3-\a\right)}\left(\ln \frac5t\right)^{2-\a}\\
 &\quad \DS\times\left[\ln\left(\ln \frac5t\right)+\psi\left(1-\a\right)
 -\psi\left(3-\a\right)\right],\\
{_t\mathcal{D}_5^\a} \overline y(t) &=
\DS \frac{1}{\Gamma\left(2-\a\right)}\left(\ln \frac5t\right)^{1-\a}
 +\frac{t\Da}{\Gamma\left(3-\a\right)}\left(\ln \frac5t\right)^{2-\a}\left[\ln\left(\ln \frac5t\right)-\psi\left(3-\a\right)\right].\\
\end{array}$$
The results are shown in Figure \ref{fig4} below.

\begin{figure}[!ht]
\begin{center}
\subfigure[${_t\mathbb{D}_5^\a}  \overline y(t)$]{\label{PlotI}\includegraphics[scale=0.25]{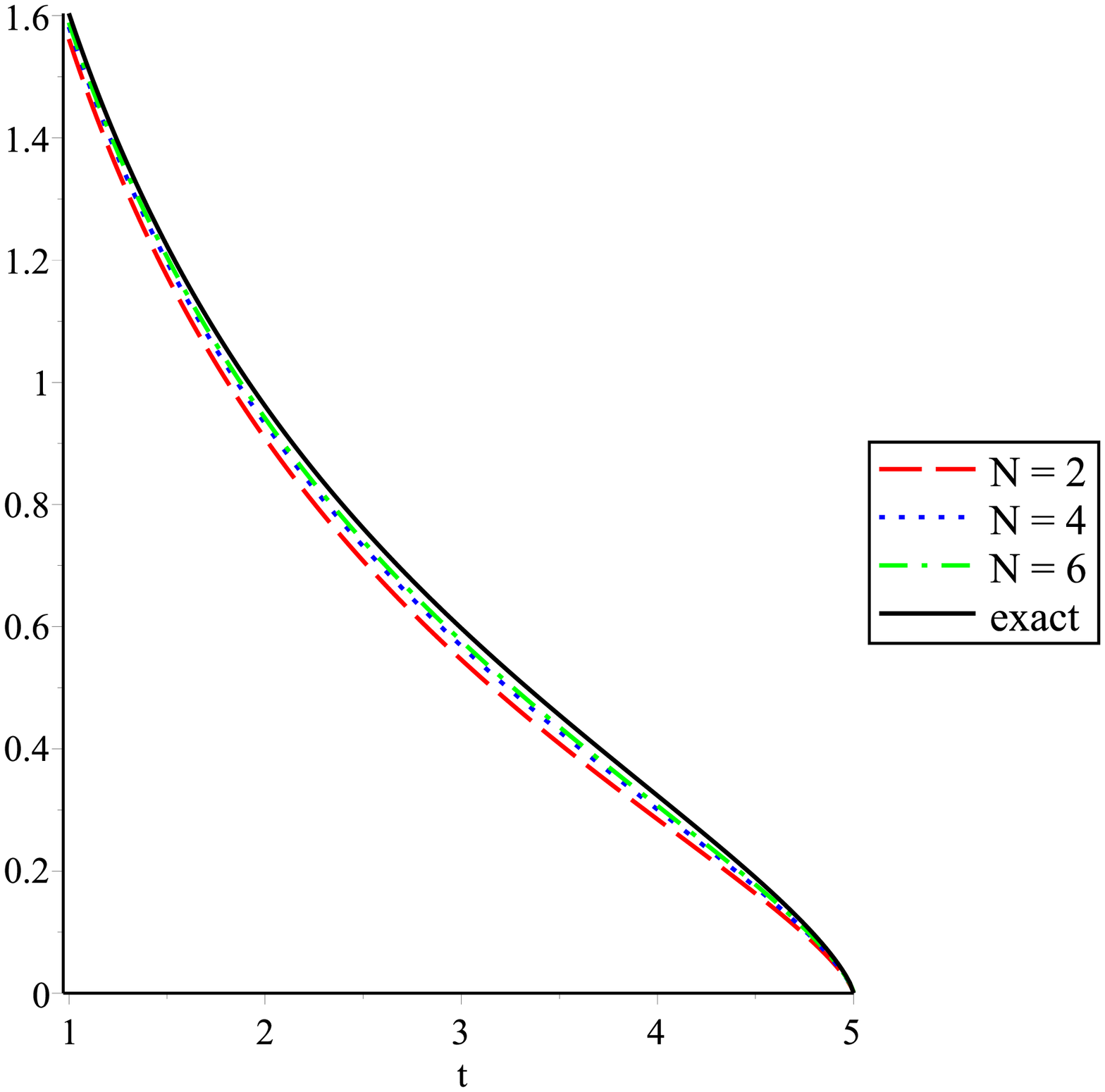}}\hspace{1cm}
\subfigure[Error]{\label{ExpExn}\includegraphics[scale=0.25]{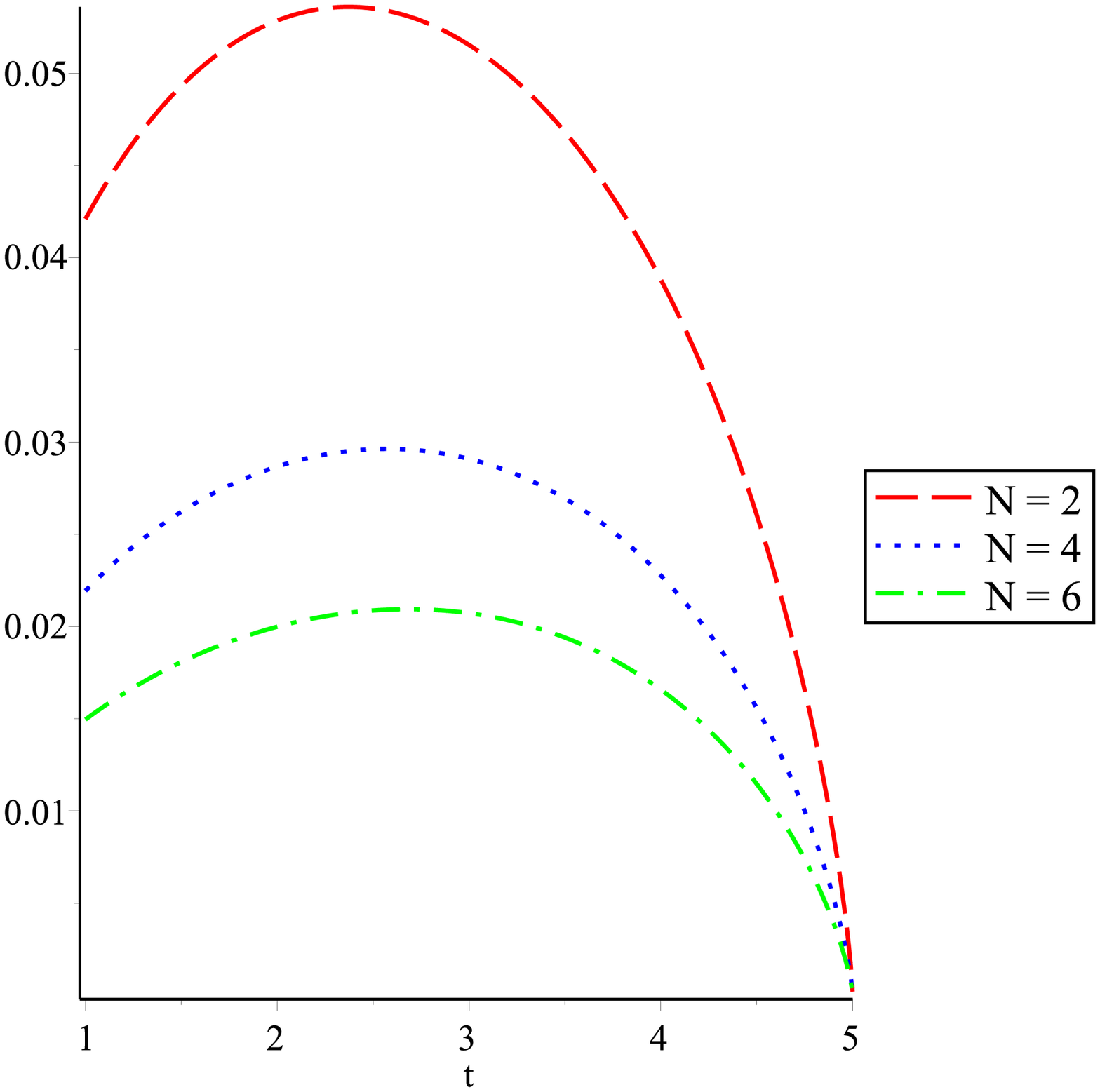}}
\subfigure[${_tD_5^\a}  \overline y(t)$]{\label{PlotI}\includegraphics[scale=0.25]{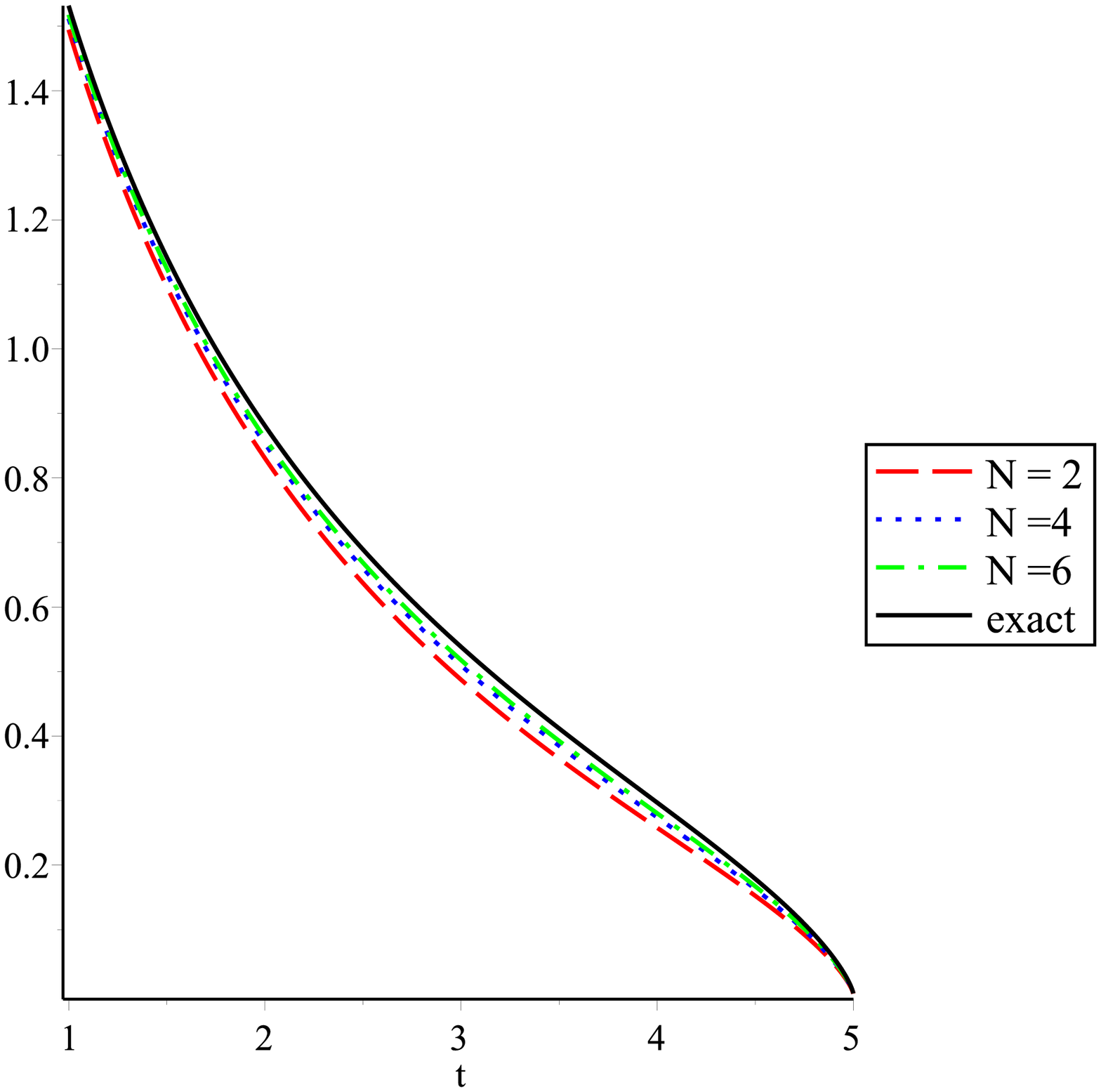}} \hspace{1cm}
\subfigure[Error]{\label{ExpExn}\includegraphics[scale=0.25]{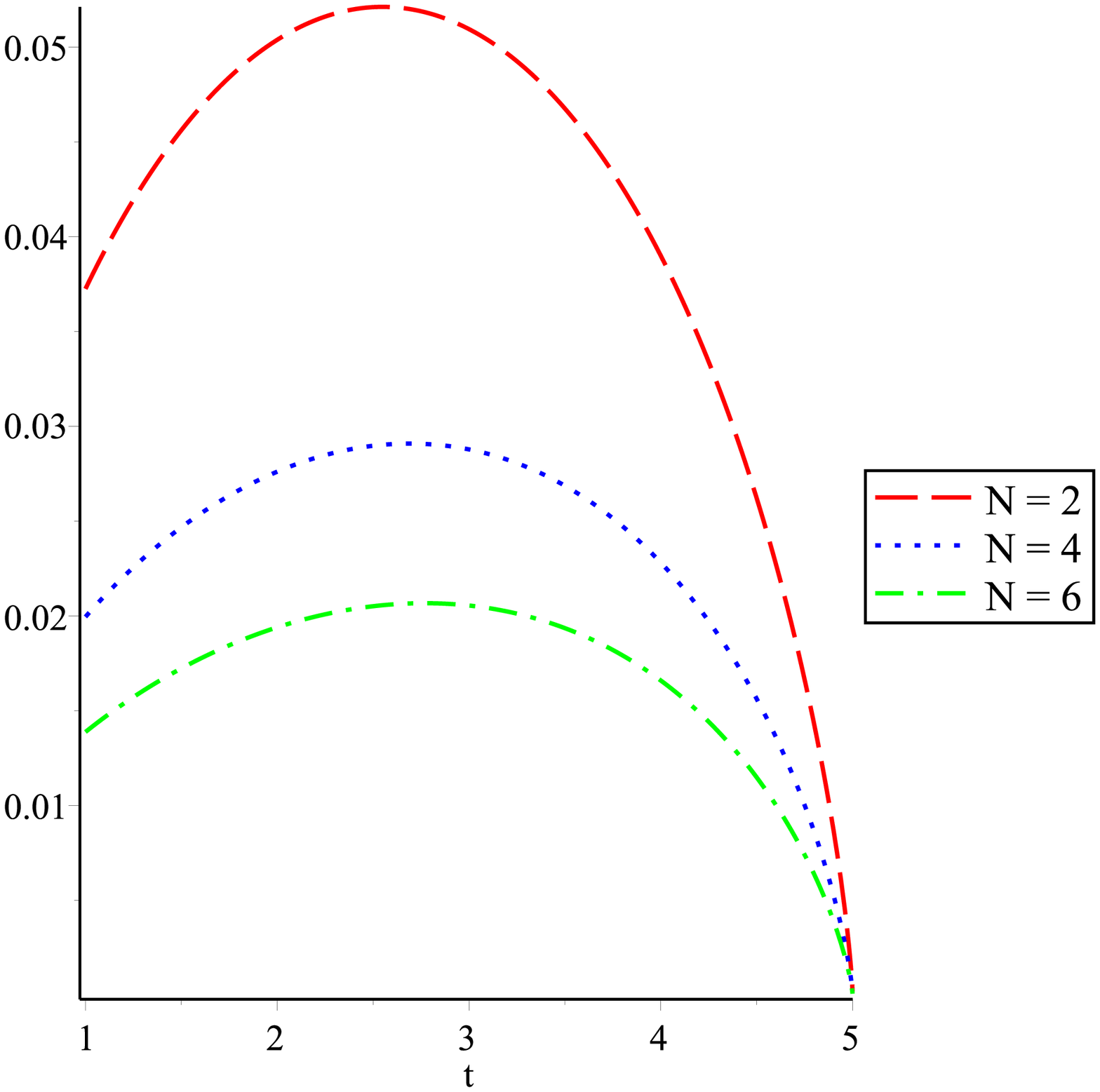}}
\subfigure[${_t\mathcal{D}_5^\a}  \overline y(t)$]{\label{PlotI}\includegraphics[scale=0.25]{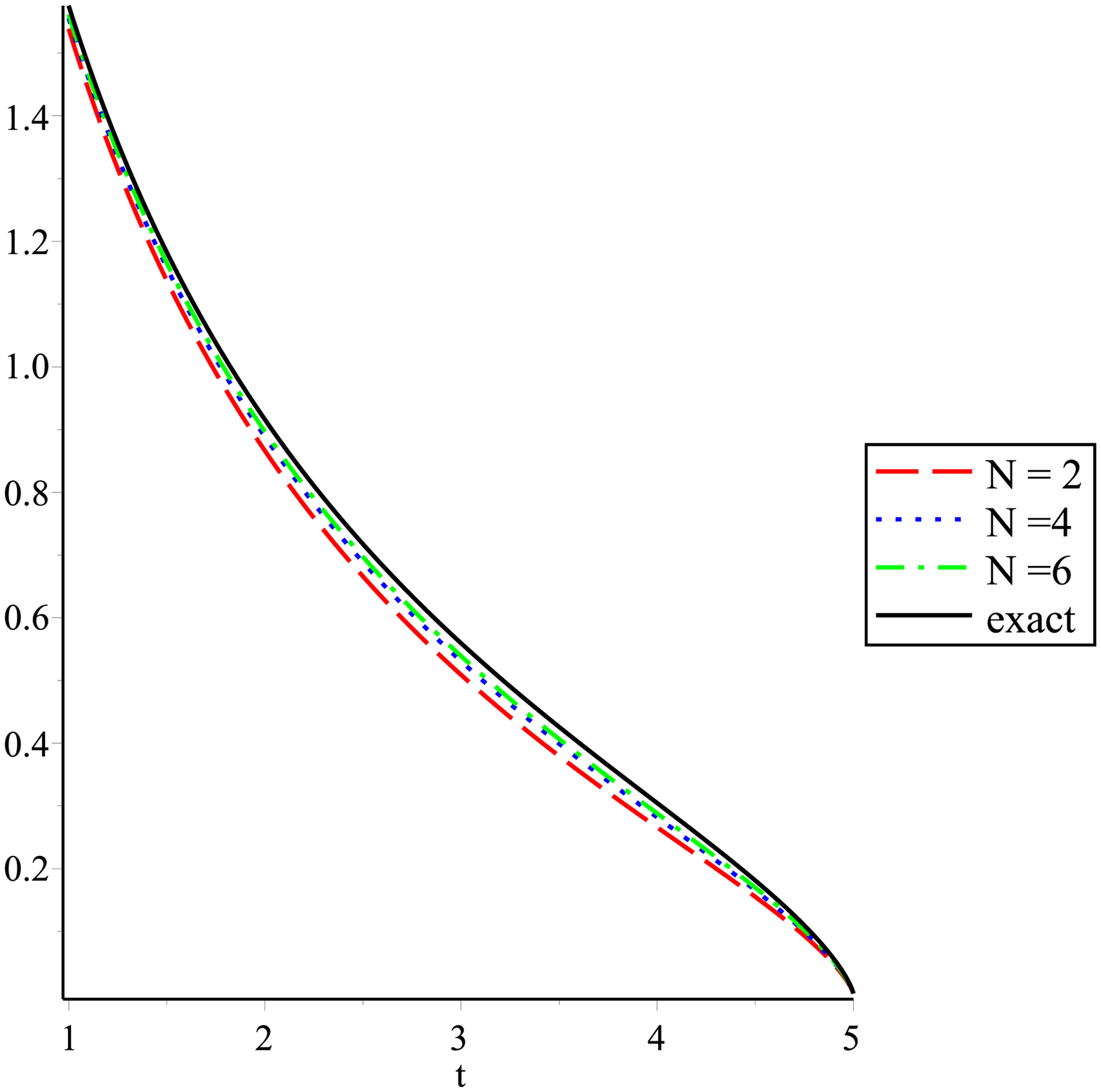}}\hspace{1cm}
\subfigure[Error]{\label{ExpExn}\includegraphics[scale=0.25]{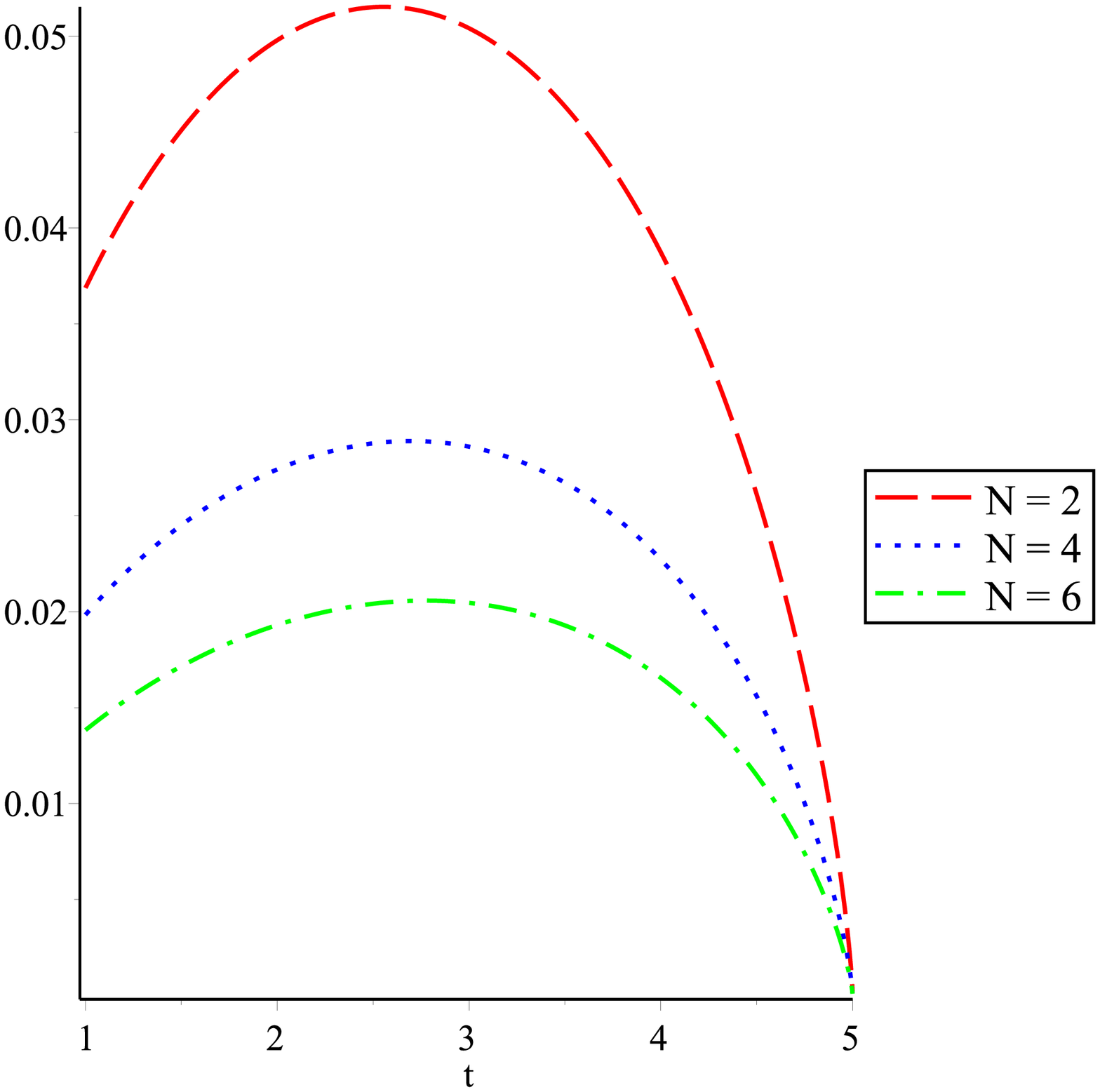}}
\end{center}
\caption{Analytic versus numerical approximations for the right Caputo--Hadamard fractional derivatives.}\label{fig4}
\label{IntExp}
\end{figure}


\section*{Acknowledgments}

This work was supported by Portuguese funds through the CIDMA - Center for Research and Development in Mathematics and Applications, and the Portuguese Foundation for Science and Technology ("FCT–-Funda\c{c}\~{a}o para a Ci\^{e}ncia e a Tecnologia"), within project UID/MAT/04106/2013.




\begin{thebibliography}{99}

\bibitem{Atanackovic1}
T. M. Atanackovic, M. Janev, S. Pilipovic\ and\ D. Zorica,
An expansion formula for fractional derivatives of variable order,
Cent. Eur. J. Phys. {\bf 11} (2013), no. 10, 1350--1360.

\bibitem{Butzer2}
P.L. Butzer, A.A. Kilbas and J.J. Trujillo,
Stirling functions of the second kind in the setting of difference and fractional calculus,
Numer. Funct. Anal. Optim. {\bf 24} (2003), no.~7-8, 673-–711.

\bibitem{Coimbra}
C. F. M. Coimbra,
Mechanics with variable-order differential operators,
Ann. Phys. (8) {\bf 12} (2003), no.~11-12, 692--703.

\bibitem{Coimbra2}
C. F. M. Coimbra, C. M. Soon\ and\ M. H. Kobayashi,
The variable viscoelasticity operator,
Annalen der Physik {\bf 14} (2005), 378--389.

\bibitem{Dzherbashyan}
M. M. Dzherbashyan and A. B. Nersesian,
Fractional derivatives and the Cauchy problem for differential equations of fractional order. Izv. Akad. Nauk Arm. SSR, Mat. {\bf 3} (1968), 3–-29.

\bibitem{Gambo5}
Y. Y Gambo, F. Jarad, D. Baleanu and T. Abdeljawad,
On Caputo modification of the Hadamard fractional derivatives,
Adv. Difference Equ. (2014), 2014:10  doi:10.1186/1687--1847--2014--10.

\bibitem{Jarad5}
F. Jarad, T. Abdeljawad and D. Baleanu,
Caputo-type modification of the Hadamard fractional derivatives,
Adv. Difference Equ. (2012), 2012:142  doi:10.1186/1687--1847--2012--142.

\bibitem{Lin}
Y. Lin and C. Xu,
Finite difference/spectral approximations for the time-fractional diffusion equation,
J. Comput. Phys. {\bf 225} (2007), no. 2, 1533–-1552.

\bibitem{Machado}
J. A. T. Machado, M. F. Silva, R. S. Barbosa, I. S. Jesus, C. M. Reis, M. G. Marcos and A. F. Galhano,
Some applications of fractional calculus in engineering,
Math. Probl. Eng. {\bf 2010} (2010), Art. ID 639801, 34~pp.

\bibitem{Miller}
K. S. Miller\ and B. Ross,
An introduction to the fractional calculus and fractional differential equations,
A Wiley-Interscience Publication, Wiley, New York, 1993.

\bibitem{Od}
T. Odzijewicz, A. B. Malinowska\ and\ D. F. M. Torres,
Noether's theorem for fractional variational problems of variable order,
Cent. Eur. J. Phys. {\bf 11} (2013), no. 6, 691--701.

\bibitem{Pooseh4}
S. Pooseh, R. Almeida\ and\ D. F. M. Torres,
Expansion formulas in terms of integer-order derivatives
for the Hadamard fractional integral and derivative,
Numer. Funct. Anal. Optim. {\bf 33} (2012), no.~3, 301--319.

\bibitem{Pooseh5}
S. Pooseh, R. Almeida\ and\ D. F. M. Torres,
Numerical approximations of fractional derivatives with applications,
Asian J. of Control  {\bf 15} (2013), no.~3, 698--712.
\bibitem{Ramirez2}
L. E. S. Ramirez\ and\ C. F. M. Coimbra,
On the variable order dynamics of the nonlinear wake caused by a sedimenting particle,
Phys. D {\bf 240} (2011), no.~13, 1111--1118.

\bibitem{samko}
S. G. Samko, A. A. Kilbas\ and\ O. I. Marichev,
Fractional integrals and derivatives,
Translated from the 1987 Russian original,
Gordon and Breach, Yverdon, 1993.

\bibitem{SamkoRoss}
S. G. Samko\ and\ B. Ross,
Integration and differentiation to a variable fractional order,
Integral Transform. Spec. Funct. {\bf 1} (1993), no.~4, 277--300.

\bibitem{Sun}
H.G. Sun, W. Chen and Y.Q. Chen,
Variable order fractional differential operators in anomalous diffusion modeling,
Physica A. {\bf 388} (2009) 4586-–4592

\end{thebibliography}
\end{document}